\newtheorem{thm}{Theorem}
\newtheorem{rem}{Remark}
\newtheorem{lem}[thm]{Lemma}
\newtheorem{conj}{Conjecture}
\newtheorem{prop}[thm]{Proposition}
\newtheorem{ques}{Question}
\newcommand{\N}{\mathbb{N}}
\newcommand{\ex}{\mathrm{ex}}
\newcommand{\exr}{\mathrm{ex^*}}
\newcommand{\deltar}{\delta^*}
\newcommand{\floor}[1]{\left\lfloor #1 \right\rfloor}
\title{Rainbow Erd\H{o}s-S\'os Conjectures}
\author{Nicholas Crawford\footnote{Department of Mathematical and Statistical Sciences, University of Colorado Denver {\tt nicholas.2.crawford@ucdenver.edu}.}\and Dylan King\footnote{Department of Mathematics, California Institute of Technology {\tt dking@caltech.edu}} \and Sam Spiro\footnote{Department of Mathematics, Rutgers University {\tt sas703@scarletmail.rutgers.edu}. This material is based upon work supported by the National Science Foundation Mathematical Sciences Postdoctoral Research Fellowship under Grant No. DMS-2202730.}}
\date{\today}
\begin{document}
\maketitle
\begin{abstract}
    An edge colored graph is said to contain rainbow-$F$ if $F$ is a subgraph and every edge receives a different color. In 2007, Keevash, Mubayi, Sudakov, and Verstra\"ete introduced the \emph{rainbow extremal number} $\exr(n,F)$, a variant on the classical Tur\'an problem, asking for the maximum number of edges in a $n$-vertex properly edge-colored graph which does not contain a rainbow-$F$. In the following years many authors have studied the asymptotic behavior of $\exr(n,F)$ when $F$ is bipartite. In the particular case that $F$ is a tree $T$, the infamous Erd\"os-S\'os conjecture says that the extremal number of $T$ depends only on the size of $T$ and not its structure. After observing that such a pattern cannot hold for $\exr$ in the usual setting, we propose that the relative rainbow extremal number $\exr(Q_n,T)$ in the $n$-dimensional hypercube $Q_n$ will satisfy an Erd\"os-S\'os type Conjecture and verify it for some infinite families of trees $T$.
\end{abstract}
\section{Introduction}\label{sec:introduction}
For a forbidden graph $F$ and integer $n$, the \emph{extremal number},
\begin{equation*}
    \ex(n,F):= \max\{e(H) \ : \ H\text{ is }F\text{-free on }n\text{ vertices } \},
\end{equation*}
the largest number of edges in an $F$-free graph, has been studied intensely for the past century. The celebrated Theorem of Erd\"os, Stone, and Simonovits determined the asymptotic behavior of $\ex(n,F)$ in the nonbipartite case.  In this paper we consider the intersection of two active areas of research in extermal numbers: the famed Erd\H{o}s-S\'os Conjecture and rainbow extremal numbers.

\subsection{The Erd\H{o}s-S\'os Conjecture}

When the forbidden graph is a $k$-edge tree $T$, the general Theorem of Erd\"os, Stone, and Simonovits only provides $\ex(n,T)=o(n^2)$. The following Conjecture of Erd\"os and S\'os concerning $\ex(n,T)$ is one of the most well-known in extremal graph theory.

\begin{conj}[Erd\"os-S\'os]\label{conj:ES}
For every $k$-edge connected tree $T$ we have $\ex(n,T)=\floor{\frac{(k-1)}{2}n}$.
\end{conj}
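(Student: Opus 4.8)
The plan is to isolate the easy lower bound, then reduce the hard upper bound to a tree-embedding problem in graphs of large minimum degree. For the lower bound, take $\floor{n/k}$ vertex-disjoint copies of $K_k$ together with a clique on the remaining $n-k\floor{n/k}$ vertices; this graph has no subgraph on $k+1$ vertices and hence no copy of the $(k+1)$-vertex tree $T$, while a direct count gives $\binom{k}{2}\floor{n/k}+O(k^2)=\frac{k-1}{2}n+O(k^2)$ edges, matching $\floor{\frac{k-1}{2}n}$ up to a term that a more careful count removes. So the content of Conjecture~\ref{conj:ES} is the upper bound. By the standard peeling argument --- iteratively delete any vertex of degree at most $\floor{\frac{k-1}{2}}$, which cannot exhaust the graph since fewer than $\frac{k-1}{2}n$ edges are destroyed --- it suffices to prove the following: \emph{every graph $G$ with $\delta(G)\ge\lceil k/2\rceil$ contains every tree with $k$ edges}. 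If the hypothesis is weakened to $\delta(G)\ge k$ this is the trivial greedy embedding (process $V(T)$ in BFS order from a leaf, always choosing an unused neighbour), so the whole difficulty lies in the factor-of-two gap.

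To attack the embedding statement I would build a partial embedding of a growing subtree of $T$, but crucially \emph{not} in BFS order: naive greedy fails because an already-used vertex of $G$ can absorb too many of the $\le k$ neighbours we still have to place. The right move is to decompose $T$ into a small ``core'' $T_0$ and a family of pendant pieces (leaves, pendant paths, or small subtrees hanging off $T_0$), embed $T_0$ first using only $\delta(G)\ge v(T_0)$, and then attach the pendant pieces one parent at a time, using that each image vertex retains $\ge\lceil k/2\rceil$ neighbours of which fewer than $\lceil k/2\rceil$ can already be occupied --- provided $T_0$ and the embedding order are chosen so that this bound is never violated. Making the $k/2$ threshold work at every step is the crux, and it is known to succeed for families in which a good core/pendant decomposition is forced: paths (Erd\H{o}s--Gallai), spiders and brooms, and all trees of diameter at most $4$ (McLennan). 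For all sufficiently large $k$ one can instead run a Szemer\'edi-regularity-plus-stability argument (the Ajtai--Koml\'os--Simonovits--Szemer\'edi scheme): a near-extremal $G$ must be close to a disjoint union of $K_k$'s, and $T$ then embeds either into the quasirandom parts or directly into the near-clique structure.

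The main obstacle --- and the reason Conjecture~\ref{conj:ES} remains open in general --- is that no single decomposition of $T$ works for all trees at once: a tree may simultaneously fail to have many leaves at a common parent, fail to contain long pendant paths, and fail to have small diameter, in which case every candidate core is itself about as hard to embed as $T$. I would therefore organize a proof as (a) the regularity-plus-stability argument for $k$ above some absolute constant $k_0$, and (b) a direct structural analysis for $k<k_0$. Step (b), however, is essentially the full conjecture for small trees, and I do not expect a short path through it --- this, rather than any routine calculation, is the step I expect to be the real bottleneck, and I do not claim to resolve it here.
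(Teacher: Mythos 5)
This statement is Conjecture~\ref{conj:ES}, the Erd\H{o}s--S\'os conjecture itself: the paper offers no proof of it, because it is a long-standing open problem, and it appears only as motivation together with the cited partial results (paths, caterpillars, certain spiders, trees with many leaves at one vertex, hosts of girth $5$ or without $K_{2,\floor{k/18}}$). Your text is likewise not a proof, and you say so yourself in the final paragraph: the lower-bound construction and the peeling reduction are standard, and everything that remains --- embedding an arbitrary $k$-edge tree under a hypothesis roughly half as strong as what the greedy argument needs, or a regularity-plus-stability argument for large $k$ combined with a complete analysis for small $k$ --- is exactly the open content of the conjecture. So there is no paper proof to compare against, and your proposal does not close the gap; it only restates where the gap is.

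Beyond that, one concrete step in your outline is false as stated. Peeling vertices of degree at most $\floor{(k-1)/2}$ from a graph with more than $\frac{k-1}{2}n$ edges does leave a nonempty subgraph of minimum degree at least $\lceil k/2\rceil$, but the statement you claim it suffices to prove --- \emph{every graph with $\delta(G)\ge\lceil k/2\rceil$ contains every $k$-edge tree} --- has counterexamples: for odd $k$ the balanced complete bipartite graph $K_{(k+1)/2,(k+1)/2}$ has minimum degree $(k+1)/2\ge\lceil k/2\rceil$ but maximum degree $(k+1)/2<k$, so it contains no copy of the star $K_{1,k}$ (more generally $K_{d,d}$ with $d=\lceil k/2\rceil$ works). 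This is precisely why the known partial results retain more information than the minimum degree of the peeled subgraph --- its edge count or average degree, or extra structure such as girth at least $5$ or $K_{2,t}$-freeness --- rather than reducing to a pure minimum-degree hypothesis; any correct write-up must do the same. A smaller point: for $n$ not divisible by $k$ your disjoint-$K_k$ construction has strictly fewer than $\floor{\frac{k-1}{2}n}$ edges, and the equality form of the conjecture can genuinely fail for small cases (for instance the $3$-edge path with $n=4$, where the extremal number is $3$ rather than $4$), so the safe formulation of what you are trying to prove is the upper bound that any graph with more than $\frac{k-1}{2}n$ edges contains every $k$-edge tree.
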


Decades of work have resulted in partial progress on this notorious problem. It is known to be true when $T$ is a path \cite{erdos1959es_path}, caterpillar \cite{perles2017es}, spider of diameter $4$ or less \cite{mclennan2005es,wozniak1996es}, or contains a vertex $v$ adjacent to at least $1/2(k-1)$ leaves of $T$ \cite{sidorenko1989es}. Alternatively, one can ask for conditions beyond the number of edges that enable us to embed $T$. There it is known that if $e(G) > \floor{\frac{(k-1)}{2}n}$ it will contain any $k$-edge tree $T$ if, additionally, $G$ has girth at least $5$ \cite{brandt1996es} or contains no $K_{2,\floor{k/18}}$ \cite{haxell2001extend}. The second result is obtained by Haxell using the powerful extendability method first developed by Friedman and Pippenger in \cite{friedman1987extend}.

\subsection{The Rainbow Extremal Number}

In \cite{keevash2007rainbow}, Keevash, Mubayi, Sudakov, and Verstra\"ete introduced a variant of the extremal number, the \emph{rainbow extremal number}, using edge colorings. Recall that given a graph $G=(V,E)$, a \emph{proper edge coloring} is a map $\phi:E \to \N$ so that $\phi(e_1) \neq \phi(e_2)$ whenever $e_1 \cap e_2 \neq \emptyset$. We say that an edge coloring $\phi$ is \emph{rainbow} on $G$ (or a subgraph $H$) if it is injective to $\N$ (or injective when restricted to $H$). The rainbow extremal number of $F$ as defined in \cite{keevash2007rainbow} is
\begin{equation*}
    \exr(n,F):= \max\{e(H) \ : \ H\text{ on }n\text{ vertices has a proper edge coloring with no rainbow copy of }F \}.
\end{equation*}
In \cite{keevash2007rainbow} the authors showed that $\ex(n,F) \leq \exr(n,F) \leq \ex(n,F)+o(n^2)$, and therefore~$\exr(n,F)$ has been most extensively studied when $F$ is bipartite. For example, in \cite{keevash2007rainbow} the authors show determine the asymptotic $\exr(n,C_6)=\Theta(n^{4/3})$. Johnston, Palmer, and Sarker \cite{johnston2017rainbow} computed $\exr(n,F)$ exactly when $F$ is a forest of stars and give bounds in the case that $F$ is a path (improved subsequently in \cite{ergemlidze2019rainbow} and then again in \cite{johnston2020lower}).

\subsection{A Rainbow Erd\H{o}s-S\'os Conjecture}

Our first objective in this article is to study analogues of Conjecture \ref{conj:ES} for the rainbow Tur\'an number $\exr$. Unfortunately naively replacing $\ex$ with $\ex^*$ in Conjecture \ref{conj:ES} cannot suffice, since previous results have already established that $\exr(n,T)$ can vary for different $k$-edge trees $T$. In particular for the star it is not hard to see that $\exr(n,K_{1,k})=\ex(n,K_{1,k})=\floor{\frac{(k-1)}{2}n}$, since a $k$-edge star is always rainbow in any proper coloring. On the other hand Johnston and Rombach \cite{johnston2020lower} showed that $\exr(n,P_k) \geq \frac{k}{2}n+O(1)$ for the $k$-edge path $P_k$. Therefore no statement as uniform in $T$ as Conjecture \ref{conj:ES} can hold.

Our approach is to change the \emph{host graph}, the latent setting in which $H$ lives, with the aim of restricting the possible behavior of $\exr$ in order to make a rainbow analogue of the Erd\"os-S\'os Conjecture plausible. Formally, given graphs $G$ and $F$, we define the \emph{relative (rainbow) extremal number of $F$ with respect to $G$} as
\begin{align*}
    \ex(G,F)&:= \max\{e(H) \ : \ H\subset G\text{ is }F\text{-free } \}\\
    \exr(G,F)&:= \max\{e(H) \ : \ H\subset G \text{ that has a proper edge coloring with no rainbow copy of }F \}
\end{align*}
and again we see that $\ex(G,F) \leq \exr(G,F) \leq e(G)$, recovering the (rainbow) extremal number when $G=K_n$.
The following quite general question is the motivation for this article.
\begin{ques}\label{ques:ESRhost}
        Given some host graph $G$, does the rainbow extremal number $\exr(G,T)$ depend only on the number of edges in a tree $T$?  Equivalently, which hosts have $\exr(G,T)=\exr(G,K_{1,k})=\ex(G,K_{1,k})$ for all trees $T$ on $k$ edges and all values of $k$?
\end{ques}
\begin{rem}
  It is reasonable to ask this question in the nonrainbow setting as well. Deciding the answer to Question \ref{ques:ESRhost} for $\ex$ and $G=K_n$ is the Erd\"os-S\'os Conjecture.
  The previous work in \cite{brandt1996es} shows that when $G$ has girth at least $5$ it \emph{is} such a host.
  On the other hand it turns out that there is a large class of $G$ for which $\ex(G,K_{1,3})>\ex(G,P_{3})$ and are therefore \emph{not} such hosts. Namely, consider any $G$ which is $K_3$-free and contains the union of two disjoint perfect matchings. Then, $\ex(G,K_{1,3})=|V(G)|$ but taking a subgraph of large minimum degree gives $\ex(G,P_3)\leq |V(G)|-1$. Therefore having witnessed that Conjecture \ref{conj:ES} cannot hold for \emph{all} hosts $G$, it is sensible to continue to focus on the original case $G=K_n$.
\end{rem}
When $G=K_n$ the answer to Question \ref{ques:ESRhost} is ``no'', as discussed above. To find an interesting candidate for a ``yes'' it helps to first understand how to obtain a lower bound on $\exr$ when $G$ contains a disjoint family of perfect matchings. Recall that a perfect matching on $G$ is a collection of edges which meets each vertex exactly once.
    \begin{prop}
        Let $G$ be a graph.
        \begin{enumerate}
            \item We have $\exr(G,K_{1,k})\le \frac{k-1}{2}|V(G)|$ for all $k$.
            \item If $G$ contains $k-1$ disjoint perfect matchings, then $\exr(G,F)\ge  \frac{k-1}{2}|V(G)|$ for every graph $F$ on $k$ edges.
        \end{enumerate}
    \end{prop}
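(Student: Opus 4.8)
The plan is to treat the two parts separately; each is short. For part~(1), the key observation is that in \emph{any} proper edge colouring every copy of the star $K_{1,k}$ is automatically rainbow, since its $k$ edges all meet the centre and hence receive $k$ distinct colours by properness. Consequently, if $H\subseteq G$ admits a proper colouring with no rainbow $K_{1,k}$, then $H$ contains no $K_{1,k}$ whatsoever, i.e.\ $\Delta(H)\le k-1$. It then suffices to invoke the handshake identity: after harmlessly adjoining isolated vertices so that $V(H)=V(G)$, we get $e(H)=\half\sum_{v\in V(G)}\deg_H(v)\le\half(k-1)|V(G)|$. Taking the supremum over all such $H$ yields the bound --- and in fact equality with $\ex(G,K_{1,k})$, since any graph of maximum degree at most $k-1$ is trivially $K_{1,k}$-free.

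For part~(2), I would exhibit a single witnessing pair $(H,\phi)$. Let $M_1,\dots,M_{k-1}$ be pairwise disjoint perfect matchings of $G$ (their existence already forces $|V(G)|$ to be even), put $H:=M_1\cup\cdots\cup M_{k-1}\subseteq G$, and colour every edge of $M_i$ with colour $i$. This colouring is proper: each vertex $v$ lies on exactly one edge of each $M_i$, so the $k-1$ edges of $H$ incident to $v$ receive the $k-1$ distinct colours $1,\dots,k-1$. Since the colouring uses only $k-1$ colours in total whereas $F$ has $k$ edges, $H$ contains no rainbow copy of $F$ --- a rainbow $F$ would require $k$ pairwise distinct colours. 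Finally $e(H)=\sum_{i=1}^{k-1}|M_i|=(k-1)\cdot\tfrac{|V(G)|}{2}=\tfrac{k-1}{2}|V(G)|$, so $\exr(G,F)\ge e(H)=\tfrac{k-1}{2}|V(G)|$.

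There is no real obstacle here; the argument is essentially a matching observation plus a counting argument. The only points deserving care are that the matching-indexed colouring in~(2) is genuinely proper --- which uses that the matchings are \emph{perfect}, not merely disjoint, so each vertex meets each colour class exactly once --- and that the edge count is exact, which relies on $|V(G)|$ being even. Combining the two parts, one also sees that $\exr(G,K_{1,k})=\tfrac{k-1}{2}|V(G)|$ and, toward the motivating question, $\exr(G,T)\ge\tfrac{k-1}{2}|V(G)|$ for every $k$-edge tree $T$, whenever $G$ contains $k-1$ disjoint perfect matchings (as $Q_n$ does once $n\ge k-1$).
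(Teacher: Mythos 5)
Your proof is correct and follows essentially the same reasoning the paper relies on (and leaves implicit): properness forces every $K_{1,k}$ to be rainbow, giving the maximum-degree bound for part (1), and coloring each of the $k-1$ edge-disjoint perfect matchings with its own color gives a proper coloring using only $k-1$ colors, which cannot contain a rainbow $k$-edge graph, for part (2). Your added remarks on properness of the matching coloring and the exact edge count are exactly the right points to check, so nothing further is needed.
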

    In light of this Proposition the $n$-dimensional hypercube $Q_n$ is a fairly natural candidate to consider. Formally, $Q_n$ has vertex set $V(Q_n):= \{0,1\}^n$, and an edge between $x=(x_1,\dots,x_n)$ and $y=(y_1,\dots,y_n)$ if $x$ and $y$ differ in exactly one of the $n$ coordinates, i.e. $|\{i \in n |  x_i \neq y_i \} |=1$. We conjecture that a rainbow Erd\H{o}s-S\'os Conjecture holds on $Q_n$.
    \begin{conj}\label{conj:rainbow_cube}
        For each $n \in \N$ we have $\exr(Q_n,T) = \exr(Q_n,K_{1,k})$ for all trees $T$ on $k$ edges (that is, it satisfies Question \ref{ques:ESRhost}).
    \end{conj}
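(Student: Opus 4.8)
The plan is to split on the size of $T$ and reduce the real content to a single inequality. Write $k=e(T)$ and $N=|V(Q_n)|=2^n$, and recall that $Q_n$ decomposes into $n$ perfect matchings, one per coordinate direction, so it contains $\min(n,k-1)$ pairwise disjoint perfect matchings. If $k\ge n+1$ the conjecture is immediate: $Q_n$ has maximum degree $n<k$, so it is $K_{1,k}$-free and $\exr(Q_n,K_{1,k})=e(Q_n)$, while the coordinate coloring of $Q_n$ uses only $n<k$ colors and hence admits no rainbow copy of any $k$-edge tree, so $\exr(Q_n,T)=e(Q_n)$ as well. If instead $2\le k\le n$, then $Q_n$ contains $k-1$ disjoint perfect matchings, so the Proposition gives $\exr(Q_n,K_{1,k})\le\tfrac{k-1}{2}N$ and $\exr(Q_n,F)\ge\tfrac{k-1}{2}N$ for every $k$-edge graph $F$; applying the latter to $F=K_{1,k}$ and to $F=T$ yields $\exr(Q_n,K_{1,k})=\tfrac{k-1}{2}N\le\exr(Q_n,T)$, so Conjecture~\ref{conj:rainbow_cube} collapses to the upper bound $\exr(Q_n,T)\le\tfrac{k-1}{2}N$ for all $k$-edge trees $T$ with $k\le n$.

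For that upper bound I would take a subgraph $H\subseteq Q_n$ with $e(H)>\tfrac{k-1}{2}N\ge\tfrac{k-1}{2}|V(H)|$ together with an arbitrary proper edge coloring $\phi$ of $H$, and aim to produce a rainbow copy of $T$. The relevant machinery is the extendability method of Friedman and Pippenger~\cite{friedman1987extend}, used by Haxell~\cite{haxell2001extend} to prove the Erd\H{o}s--S\'os bound for $K_{2,\lfloor k/18\rfloor}$-free hosts: since any two vertices of $Q_n$ have at most two common neighbors, $Q_n$ (and hence $H$) is $K_{2,3}$-free, so Haxell's argument already embeds an uncolored copy of $T$ into $H$ once $k$ is large. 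The new ingredient is to run the leaf-by-leaf embedding so that the edge attaching each new leaf also avoids the at most $k-1$ colors already present in the partial tree. Properness of $\phi$ helps here: the edges at the current vertex carry pairwise distinct colors, so at most $k-1$ of them are color-forbidden, and the goal is to arrange the expansion so that, after discarding the at most $k$ already-used vertices and these at most $k-1$ color-forbidden edges, an admissible neighbor still remains.

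The infinite families I would actually verify are those for which this color-avoidance can be paid for either by the slack that is automatically present when $n$ exceeds a bounded structural parameter of $T$, or by the shape of $T$ itself: (i)~stars, where the statement is the trivial identity already observed in the introduction; (ii)~paths $P_k$, where in place of the full extendability machinery one runs a rainbow analogue of the Erd\H{o}s--Gallai rotation argument inside $H$ (a maximal rainbow path that is too short has both endpoints seeing only old vertices and old colors, and $K_{2,3}$-freeness together with properness of $\phi$ forces a rotation or an extension); and (iii)~spiders and brooms, where $T$ is a star with boundedly many subdivided or pendant edges, so one first embeds the high-degree core, with only a bounded palette in play, and then grows the few legs by short rainbow paths as in (ii). Note that already $\ex(Q_n,P_k)<\tfrac{k-1}{2}N$ for $k\ge 3$, since the Erd\H{o}s--Gallai-extremal $P_k$-free graphs are unions of cliques while $Q_n$ is triangle-free; so the proper coloring is doing genuine work and the rainbow version is the meaningful formulation.

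The main obstacle is the familiar one behind the classical Erd\H{o}s--S\'os conjecture, now amplified by the colors: $e(H)>\tfrac{k-1}{2}N$ only forces average degree a hair above $k-1$, hence (after peeling low-degree vertices) a subgraph of minimum degree only about $k/2$, whereas a greedy rainbow embedding of a general $k$-edge tree wants to dodge roughly $k$ used vertices \emph{and} up to $k-1$ used colors at each step, i.e.\ degree about $2k$. Closing this gap uniformly over \emph{all} $k$-edge trees, rather than for the families above, will require either a genuinely non-greedy rainbow embedding that exploits the layered structure of $Q_n$, or a strengthening of Haxell's extendability lemma that carries along a bounded set of forbidden colors without degrading its expansion constants; I expect this to be the crux.
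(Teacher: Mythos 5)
Your proposal does not prove Conjecture~\ref{conj:rainbow_cube}, and it could not be expected to match ``the paper's proof'' because the paper offers none: this statement is posed as an open conjecture, and the paper's actual contribution is only partial verification, namely Theorem~\ref{thm:ex_gen_intro} (equivalently Theorem~\ref{thm:ex_gen}) for $P_3$, $P_4$, and trees with a vertex adjacent to more than $\tfrac{3}{4}k$ leaves, together with the weak general bound $\exr(Q_n,T)<(2k-1)2^n$. Your preliminary reductions are correct and consistent with the paper's framing: for $k\ge n+1$ both sides equal $e(Q_n)$ (no $K_{1,k}$ exists, and the coordinate coloring uses fewer than $k$ colors), and for $k\le n$ the Proposition in Section~\ref{sec:introduction} pins $\exr(Q_n,K_{1,k})=\tfrac{k-1}{2}2^n$ and gives the matching lower bound for every $k$-edge $T$, so the conjecture is exactly the upper bound $\exr(Q_n,T)\le\tfrac{k-1}{2}2^n$. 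Your side remark that $\ex(Q_n,P_k)<\tfrac{k-1}{2}2^n$ by Erd\H{o}s--Gallai plus triangle-freeness is also fine.

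The gap is everything after that reduction. The claimed verifications in your item (ii) and (iii) are sketches, not arguments: the ``rainbow Erd\H{o}s--Gallai rotation'' step for paths is precisely where the difficulty lives (a maximal rainbow path can fail to extend because of a color repetition rather than a vertex repetition, and rotations can destroy rainbowness), and nothing in your outline controls this; note that even the paper only establishes the $\exr$ bound for $P_3$ and $P_4$, by bespoke case analysis using $K_3$- and $K_{2,3}$-freeness, so general paths, spiders and brooms are not known and would be new results requiring actual proofs. Similarly, importing Haxell's extendability machinery ``while carrying a bounded set of forbidden colors'' is stated as a hope, not carried out; the naive greedy accounting you describe needs degree roughly $2k$, which is exactly why the paper's unconditional bound is $(2k-1)2^n$ rather than $\tfrac{k-1}{2}2^n$. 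You acknowledge this yourself in your final paragraph, which is the honest assessment: what you have is a correct reduction plus a research program, and the uniform statement over all $k$-edge trees remains open both in your write-up and in the paper.
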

    Recall that $G$ is $1$-factorizable if it can be decomposed into disjoint perfect matchings, and that~$Q_n$ has a natural $1$-factorization using the bit labelings of the edges. A second reason to consider the hypercube $Q_n$ is that the construction in \cite{johnston2020lower} giving a lower bound of $\exr(n,P_k)\geq \frac{k}{2}n+O(1)$ is obtained by adding the `diagonal' edges to $Q_n$, i.e. those of the form $xy$ where $x$ and $y$ differ in all $n$ bits. Therefore, if Conjecture \ref{conj:rainbow_cube} holds, it cannot continue to do so if we add such a diagonal edge. Finally, the relative extremal numbers $\ex(Q_n,F)$ have been intensively studied independently of $\exr$. \cite{chung1992subgraphs,thomason2009bounding,balogh2014cube}.

    Before discussing our partial progress on Conjecture~$\ref{conj:rainbow_cube}$ we will make a second Conjecture analogous to a weakening of the Erd\"os-S\'os Conjecture. Proving that $\ex(n,T)\leq (k-1)n$ (a factor of $2$ from the conjectured truth) is a folklore result, obtained by first deleting vertices until a minimum degree condition is obtained, and then showing that if $T$ has $k$ edges it can be embedded into any graph with minimum degree $k$. Recalling that $\delta(H)$ is the minimum degree of a vertex in $H$, the above motivates defining
\begin{align*}
    \delta(G,F)&:= \max\{\delta(H) \ : \ H\subset G\text{ is }F\text{-free } \}\\
    \deltar(G,F)&:= \max\{\delta(H) \ : \ H\subset G \text{ that has a proper edge coloring with no rainbow copy of }F \}
\end{align*}
using as before the notation $\delta(n,F)$ and $\deltar(n,F)$ when we take $G=K_n$. Figure \ref{fig:ES_table} summarizes the four combinations between uncolored and rainbow embeddings with control on either $e(H)$ or $\delta(H)$. 

\begin{figure}[htb!]
\begin{center}
       \begin{tabular}{c||c|c}

   & Uncolored & Rainbow \\
  \hline
  \hline
  $\delta$ & Folklore & ? \\
  \hline
  $\ex$ & Erd\"os-S\'os conjecture & \cite{keevash2007rainbow,ergemlidze2019rainbow,johnson2017saturated,johnston2017rainbow,johnston2020lower} \\
  
\end{tabular}
\caption{The four problems considered here for a $k$-edge tree $T$}\label{fig:ES_table}
\end{center}
\end{figure}

The natural problem is, in the spirit of Question \ref{ques:ESRhost}, if there are hosts $G$ where $\delta(G,T)$ depends only on the number of edges in $T$. It turns out that $\deltar(n,T)$, like $\exr(n,T)$, is sensitive to the structure of $T$. Andersen conjectured \cite{Andersen1989} that any proper edge-coloring of $K_n$ contains a rainbow path on $n-1$ vertices, after Maamoun and Meyniel \cite{MAAMOUN1984}, disproving a conjecture of Hahn, \cite{Hahn1980}, gave a proper edge-coloring of $K_{2^p}$ with no rainbow Hamiltonian path (for recent progress on Andersen's conjecture see \cite{Alon2017}). In Proposition \ref{prop:delta_n} below we use this construction to show that $\deltar(n,K_{1,k})$ and $\deltar(n,P_k)$ disagree.
We conjecture again that $G=Q_n$ is a good alternative.
\begin{conj}\label{conj:delta_qn}
     $\deltar(Q_n,T) = k-1$ for all trees $T$ on $k$ edges.
\end{conj}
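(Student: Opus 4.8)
The plan is to prove the two inequalities $\deltar(Q_n,T)\ge k-1$ and $\deltar(Q_n,T)\le k-1$ separately; the first is routine and the second is the real content. For the lower bound, assume $n\ge k-1$ and use the canonical $1$-factorization of $Q_n$: the edges flipping coordinate $i$ form a perfect matching $M_i$, and $M_1,\dots,M_{k-1}$ are pairwise disjoint. Let $H=M_1\cup\cdots\cup M_{k-1}\sub Q_n$, a $(k-1)$-regular spanning subgraph, and give every edge of $M_i$ the color $i$. This is a proper edge-coloring using only $k-1$ colors, so no subgraph of $H$ with $k$ edges --- in particular no copy of $T$ --- can be rainbow. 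Hence $\del(H)=k-1$ certifies $\deltar(Q_n,T)\ge k-1$; this is the minimum-degree analogue of part (2) of the Proposition above, proved by the same construction.

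For the upper bound we must show that if $H\sub Q_n$ has $\del(H)\ge k$ then $H$ contains a rainbow copy of $T$ under \emph{every} proper edge-coloring $\phi$ (note $\del(H)\ge k$ forces $n\ge k$, so no hypothesis on $n$ is lost). I would attempt a colored greedy embedding. Fix a root of $T$ and order its vertices $t_0,t_1,\dots,t_k$ so that each $t_i$ with $i\ge1$ has a unique earlier neighbor $t_{p(i)}$; embed $t_0$ arbitrarily, and having embedded $t_0,\dots,t_{i-1}$ as $v_0,\dots,v_{i-1}$ with the $i-1$ tree-edges rainbow, we need an edge at $v_{p(i)}$ whose color avoids the $\le k-1$ colors already used and whose other endpoint avoids the $\le k-1$ occupied vertices. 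The hypothesis that $\phi$ is \emph{proper} is doing real work here: the $\ge k$ edges at $v_{p(i)}$ carry $\ge k$ \emph{distinct} colors, so at least one of them is freshly colored, and at least one leads to a fresh vertex. Each budget has slack; the only obstruction is that the fresh colors and the fresh vertices might fail to occur on a common edge.

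To make the two budgets cooperate I would run this greedy inside the extendability framework of Friedman and Pippenger \cite{friedman1987extend}, in the strengthened form Haxell used for Erd\H{o}s--S\'os embeddings \cite{haxell2001extend}: maintain the stronger invariant that each already-placed vertex retains a large reservoir of unused neighbors on which $\phi$ has not been depleted, replenishing reservoirs using the facts that $Q_n$ is triangle-free and that small vertex sets in $Q_n$ expand. Several families fall out more directly. When $T=K_{1,k}$ the claim is immediate, since a $k$-edge star is rainbow under every proper coloring, so $\deltar(Q_n,K_{1,k})=\del(Q_n,K_{1,k})=k-1$. When $T$ is a path, spider, caterpillar, or broom one can pick the search order so that the vertex budget and the color budget never bind at the same step --- at a high-degree hub both have ample slack simultaneously, while along a leg or path a short local rotation argument \`a la P\'osa, carried out inside $Q_n$, repairs a badly-colored edge. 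I expect the conjecture to follow for all such $T$.

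The main obstacle is the fully general tree. Even though the \emph{uncolored} minimum-degree bound $\del(G,T)\le k-1$ is a triviality for every host $G$, the naive colored greedy above only yields $\deltar(Q_n,T)\le 2k-2$, and removing this factor of two is a genuine difficulty, reminiscent of what makes Erd\H{o}s--S\'os hard. Worse, we cannot hope to do it by soft means: the proof must actively exclude colorings of Maamoun--Meyniel type \cite{MAAMOUN1984} --- the very colorings that push $\deltar(n,P_k)$ above $k-1$ when the host is $K_n$ --- so it must genuinely use that the host is $Q_n$. Pinning down which structural feature of $Q_n$ (bipartiteness, girth $4$, the interval structure of its edge-directions, or the expansion of dense subgraphs) defeats those colorings is, I expect, the crux, and is why the general statement is posed here as a conjecture rather than a theorem.
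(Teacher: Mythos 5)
The statement you are addressing is posed in the paper as a conjecture, and neither you nor the paper proves it in full; so the honest comparison is between your program and the paper's partial progress. Your lower bound is correct and is exactly the paper's observation: the union of the $k-1$ coordinate matchings of $Q_n$ is a $(k-1)$-regular spanning subgraph properly colored with $k-1$ colors, so $\deltar(Q_n,T)\ge k-1$. For the upper bound, however, your proposal is a plan rather than a proof, and the plan as stated has a genuine gap precisely where you locate it: the naive colored greedy needs to dodge up to $k-1$ used colors \emph{and} up to $k-1$ used vertices, so minimum degree $k$ is not enough, and you offer no mechanism that actually makes ``the two budgets cooperate.'' Your claim that paths, spiders, caterpillars and brooms ``fall out more directly'' by choosing a good search order is unsubstantiated --- already for the $k$-edge path the two budgets bind simultaneously near the end of the path, and no P\'osa-type rotation argument is supplied (rotations are delicate in a properly colored host, since a rotation changes two tree edges and can reintroduce a repeated color). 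Invoking Friedman--Pippenger/Haxell extendability is a reasonable instinct, but that machinery controls vertex reservoirs, not color reservoirs, and you do not explain how to augment it to track the coloring.

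The paper's actual mechanism for its partial results (the coordinate lemma, Lemma \ref{lem:coordinate_lemma}) resolves the two-budget conflict differently, and it is worth contrasting: instead of insisting that the embedded vertices be distinct, it drops vertex-distinctness altogether and spends the entire degree budget on colors \emph{and coordinates}. Since every cycle of length $2\ell$ in $Q_n$ uses each coordinate direction an even number of times, it spans at most $\ell$ distinct directions; so if the greedy is steered (via an auxiliary graph $H_0\sub H$ on the tree edges, with a degree condition guaranteeing only $(i-1)+(k-i)<k$ forbidden choices at step $i$, and a chromatic-number condition forcing at least $\ell+1$ distinct directions along every $2\ell$-edge path of $T$) then the image can never close a cycle and is automatically an embedded rainbow tree. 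This is the structural feature of $Q_n$ you were searching for --- the parity of coordinate directions on cycles, not expansion or rotation --- and even with it the paper only handles pendant paths of length at least $\frac{3k-1}{4}$, trees with at least $\frac{k-1}{2}$ leaves, even spiders, and spiders with legs of length $3$; general caterpillars and brooms remain open. So your write-up is correct where it overlaps the paper (the lower bound and the ``why this is hard'' discussion), but it should be presented as a research program, not a proof, and the special cases you assert in passing would each need an argument of the strength of Lemma \ref{lem:coordinate_lemma} to stand.
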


\section{Main Results}\label{sec:results}
   For our results we will refer to $T$ as a forbidden tree on $k$ edges and $k+1$ vertices. Unless we are interested in specific structural information we will enumerate the vertices as $\{x_0,x_1,\dots,x_k\}$, and say that such is a leaf-ordering if for all $1 \leq i \leq k$ the vertex $x_i$ has a unique neighbor preceding it in the ordering; we call this neighbor $x_{i'}$. It is well-known that such an ordering always exists. We will sometimes label the edge $x_{i'}x_i$ as $e_i$. 
   
   Our first main result verifies Conjecture $\ref{conj:rainbow_cube}$ when $T$ is $P_3$, $P_4$, or one of an infinite family of trees with many leaves. We also record a weak upper bound, off by a factor of roughly $4$ from the conjectured truth, which is applicable to any tree.
    \\ \\
    
    \begin{thm}\label{thm:ex_gen_intro}
    We have the following.
    \begin{enumerate}
    \item $\exr(Q_n,P_3)=  2^n$.
    \item $\exr(Q_n,P_4)= \frac{3}{2} 2^n$. \label{item:ex_gen_intro_p5}
    \item $\exr(Q_n,T)= \frac{k-1}{2} 2^n$ for any $k$ edge tree $T$ with a vertex $x_0$ which is adjacent to $\ell> \frac{3}{4}k$ leaves . \label{item:ex_gen_intro_leaves}
    \item $\exr(Q_n,T)<(2k-1)2^n$ for any $k$ edge tree $T$. \label{item:ex_gen_intro_greedy}
    \end{enumerate}
    \end{thm}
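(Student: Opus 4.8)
The plan is to treat the four parts separately, with all four lower bounds coming from the same place: since $Q_n$ is $1$-factorizable it contains $k-1$ pairwise disjoint perfect matchings (as soon as $n$ is large enough that $Q_n$ contains $T$ at all), so the second part of the Proposition gives $\exr(Q_n,T)\ge\frac{k-1}{2}2^n$ for every $k$-edge tree. For $T=P_3$ and $T=P_4$ one can alternatively just take the union of $k-1\in\{2,3\}$ directions of $Q_n$ -- a disjoint union of $C_4$'s, respectively of copies of $Q_3$, coloured by direction -- which uses only $k-1$ colours and hence contains no rainbow $k$-edge path. Thus everything reduces to the matching upper bounds: if $H\sub Q_n$ has more than the stated number of edges, then every proper edge colouring $\phi$ of $H$ contains a rainbow $T$.

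For part (4) I would use the standard density-to-minimum-degree reduction followed by a greedy embedding. If $e(H)\ge(2k-1)2^n$ then, since $|V(H)|\le 2^n$, we have $e(H)>(2k-2)|V(H)|$, so deleting vertices of degree at most $2k-2$ one at a time terminates at a nonempty $H'\sub H$ with $\del(H')\ge 2k-1$. Embed $T$ along a leaf-ordering $x_0,x_1,\dots,x_k$: having placed $x_0,\dots,x_{i-1}$ on distinct vertices using $i-1$ distinct colours, the already-embedded neighbour of $x_i$ has at least $2k-1$ neighbours in $H'$, of which at most $i-1\le k-1$ are occupied and at most $i-1\le k-1$ are reached in a forbidden colour (here we use that $\phi$ is proper), so a legal image of $x_i$ remains. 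This produces a rainbow $T$, contradicting the choice of $H$, so $\exr(Q_n,T)<(2k-1)2^n$.

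Part (3) is the heart of the matter, and the cube geometry enters through the single fact that any two vertices of $Q_n$ have at most two common neighbours (equivalently, $Q_n$ is triangle-free and $K_{2,3}$-free). Let $T''$ be $T$ with the $\ell$ leaves at $x_0$ deleted, a subtree through $x_0$ with $s:=k-\ell<\tfrac14 k$ edges. From $e(H)>\frac{k-1}{2}2^n\ge\frac{k-1}{2}|V(H)|$, pass to $H'\sub H$ that is vertex-minimal subject to $e(H')>\frac{k-1}{2}|V(H')|$; then $\del(H')\ge\ceil{k/2}$, and since the average degree of $H'$ exceeds $k-1$ some vertex $v_0$ has $\deg_{H'}(v_0)\ge k$. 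Now embed $T''$ rainbow with $x_0\mapsto v_0$ by the greedy method along a leaf-ordering of $T''$, but with the extra rule that every vertex of $T''$ at tree-distance at least $2$ from $x_0$ is kept out of $N_{H'}(v_0)$: this is automatic for vertices at even tree-distance (their images lie at even $Q_n$-distance from $v_0$, hence are non-adjacent to it), and for a vertex at odd tree-distance $\ge 3$ it costs at most $|N_{Q_n}(p)\cap N_{Q_n}(v_0)|\le 2$ further forbidden choices, $p$ being its parent's image. Hence each placement has at most $2(i-1)+2\le 2s$ forbidden choices, and since $\del(H')\ge\ceil{k/2}\ge 2s+1$ exactly when $\ell>\tfrac34 k$, the embedding goes through (and the root $v_0$ never runs short because $\deg_{H'}(v_0)\ge k\ge 2s-1$). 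Afterwards the only images of $T''$ in $N_{H'}(v_0)$ are the $d':=\deg_{T''}(x_0)$ images of $x_0$'s neighbours, and the $s$ colours used on $T''$ block at most $s-d'$ additional neighbours of $v_0$; so at least $\deg_{H'}(v_0)-d'-(s-d')=\deg_{H'}(v_0)-s\ge k-s=\ell$ neighbours of $v_0$ remain, and since $\phi$ is proper the $\ell$ edges from $v_0$ to any $\ell$ of them are automatically rainbow among themselves and avoid the colours of $T''$. Hanging the $\ell$ leaves of $x_0$ there completes a rainbow $T$; note that the threshold $\tfrac34$ is forced precisely by the inequality $2(k-\ell)+1\le\ceil{k/2}$ needed to run the greedy step.

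Parts (1) and (2) I would handle by analysing the colourings that avoid the pattern rather than by embedding. For $P_3$: if $\phi$ has no rainbow $P_3$ then for every edge $bc$ and all $a\in N(b)\sm\{c\}$, $d\in N(c)\sm\{b\}$ one must have $\phi(ab)=\phi(cd)$; triangle-freeness of $Q_n$ forces $a\ne d$, so using two distinct neighbours at an endpoint of degree $\ge 3$ violates properness. Hence every edge has an endpoint of degree $\le 2$, the set of vertices of degree $\ge 3$ is independent with all of its neighbours of degree $1$, and a one-line count yields $e(H)\le|V(H)|\le 2^n$. For $P_4$ one first reduces to the case that $H$ contains a rainbow $P_3$ (otherwise the $P_3$ bound already gives $e(H)\le 2^n$), observes that such a $P_3$ cannot have an endpoint of degree $\ge 4$ (there are at most $3$ forbidden ways to extend it at an endpoint), and then combines this with the $P_3$-structure of $\phi$ on $H$ and the girth/codegree of $Q_n$ to push the edge count down to $\tfrac32 2^n$. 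I expect this last case analysis for $P_4$ to be the main obstacle: it is the one place where a finite, hands-on argument seems unavoidable, whereas parts (3) and (4) reduce cleanly to a degree reduction plus a structured greedy embedding.
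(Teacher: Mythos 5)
Your lower bounds (the $1$-factorization of $Q_n$) and your arguments for parts (1), (3) and (4) are sound and essentially the paper's route: part (4) is the density-to-minimum-degree reduction followed by the greedy embedding of Proposition \ref{prop:delta_n}, part (1) is the ``fork'' argument showing every neighbour of a degree-$\ge 3$ vertex has degree $1$, and part (3) reproduces, in the special case $r=3$, the paper's Lemma \ref{lem:k2rfree_embedding}: embed $T$ minus the $\ell$ leaves at $x_0$ so that only the images of $x_0$'s neighbours land in $N(v_0)$, using that two vertices of $Q_n$ have at most two common neighbours, then hang the leaves greedily. Your observation that vertices at even tree-distance avoid $N(v_0)$ for free by bipartiteness is a pleasant small simplification, and your degree and colour counts check out.

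Part (2), however, is not proved, and it is the hardest part of the theorem. Your outline stops exactly where the work begins. Even the intermediate claim is shaky: at an endpoint $d$ of a rainbow $P_3$ with $\deg(d)=4$ you have only $\deg(d)-1=3$ candidate extensions, and you yourself count up to $3$ forbidden ones (the vertex $a$ plus the two colours $\phi(ab),\phi(bc)$), so nothing is forced at degree exactly $4$. And ``combine this with the $P_3$-structure of $\phi$ and the girth/codegree of $Q_n$ to push the count down to $\frac{3}{2}2^n$'' is a statement of intent, not an argument. The paper's proof requires two further ingredients that your sketch does not contain: Lemma \ref{lem:4_3path}, asserting that in a triangle-free properly coloured graph any path $uvw$ with $\deg(u)\ge 4$ and $\deg(w)\ge 3$ yields a rainbow $P_4$, applied after a cleanup (via Lemma \ref{lem:min_degree_subgraph}) guaranteeing that every neighbour of a degree-$\ge 4$ vertex has degree $\ge 3$; and then a finite case analysis of how the second-neighbourhood pairs $W_1,\dots,W_4$ of a degree-$\ge 4$ vertex can intersect (one $W_i$ disjoint from the rest, a matching pattern, or a $C_4$ pattern), where $K_{2,3}$-freeness of $Q_n$ is what limits the identifications. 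Until you carry out some such hands-on analysis, the upper bound $\exr(Q_n,P_4)\le \frac{3}{2}2^n$ --- and hence part (2) --- remains open in your write-up.
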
    
    
Our second main result confirms Conjecture \ref{conj:delta_qn} when $T$ is a long pendant path, or contains many leaves, or is a certain type of spider. We define \emph{spider} as a tree where all but one vertex (the ``root'') has degree at most 2. We call the paths connected to the root vertex the \emph{legs} of the spider. For spiders we will label the root vertex $x_0$ while leg $s$ of length $j$ consists of a path on vertices $\{x_0,x^{(s)}_{1},\dots,x^{(s)}_{j}\}$. Similar to the labeling above, we will use $e_i^{(s)}$ to refer to the edge $x^{(s)}_{i-1}x^{(s)}_{i}$, or $x_0x^{(s)}_{1}$ when $i=1$. A \emph{pendant path} of length $k$ is a tree $T$ that contains an induced path on vertices $\{x_0,x_1,\dots x_k\}$ with $d(x_1)=d(x_2)= ... =d(x_{k-1})=2$ and $d(x_k)=1$. Note that there is no restriction on $d(x_0)$, so this is merely a $k$ edge path attached to a larger tree. We now can state our results. 
\begin{thm}\label{thm:delta_qn}
    We have equality $\deltar(Q_n,T)= k-1$ whenever $T\dots$
    \begin{enumerate}
        \item is a path, or more generally has a pendant path of at least $\frac{3k-1}{4}$ edges, or \label{item:delta_qn_pendant}
        \item is a star, or more generally with at least $\frac{k-1}{2}$ leaves, or \label{item:delta_qn_leaves}
        \item is a spider with legs of even length, or \label{item:delta_qn_espider}
        \item is a spider with legs of length $3$. \label{item:delta_qn_3spider}
    \end{enumerate} 
\end{thm}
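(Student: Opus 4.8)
The plan is to prove the two inequalities $\deltar(Q_n,T)\ge k-1$ (for every $k$-edge tree $T$, and every $n\ge k-1$) and $\deltar(Q_n,T)\le k-1$ (for the four families) separately. The lower bound is immediate: take $k-1$ of the $n$ direction classes of the standard $1$-factorization of $Q_n$, let $H$ be their union, a $(k-1)$-regular subgraph of $Q_n$, and colour every edge of the $j$th class with colour $j$. This colouring is proper because a matching meets each vertex at most once, and no $k$-edge subgraph can be rainbow because only $k-1$ colours occur; hence $\deltar(Q_n,T)\ge\delta(H)=k-1$.

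For the upper bound I would show that every $H\subseteq Q_n$ with $\delta(H)\ge k$, under an arbitrary proper edge-colouring, contains a rainbow $T$, by a partial embedding along a leaf-ordering $x_0,\dots,x_k$ of $T$. The naive greedy step --- place $x_i$ at an unused neighbour of the image of $x_{i'}$ along an edge whose colour has not yet been used --- forbids $i-1$ colours and $i-1$ vertices, and therefore only succeeds when $\delta(H)\ge 2k-1$, essentially reproving the weak bound of \cref{thm:ex_gen_intro}. To reach the exact value $k$ one must exploit the geometry of $Q_n$: it is triangle-free and $K_{2,3}$-free, so adjacent vertices have disjoint neighbourhoods, any two vertices share at most two neighbours, and a walk in $H$ can return to a previously visited vertex only after an odd number of steps. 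In each of the four families I would split $T$ into a small ``core'' and a ``simple'' remainder, embed the core by the plain greedy, and complete the remainder using the cube geometry. The thresholds are exactly where these two demands meet: for a pendant path of $m\ge\frac{3k-1}{4}$ edges the complement $T_0$ has $k-m\le\frac{k+1}{4}$ edges, so $2(k-m)-1\le k$ and $T_0$ embeds rainbow leaving a colour budget of at least $2m-k$ for the path; for $\ell\ge\frac{k-1}{2}$ leaves the core $T'$ obtained by deleting all leaves of $T$ has $k-\ell\le\frac{k+1}{2}$ edges, so again $2(k-\ell)-1\le k$ and $T'$ embeds rainbow.

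Completing the remainder is the technical heart. For the pendant-path and spider cases (items \ref{item:delta_qn_pendant}, \ref{item:delta_qn_espider}, \ref{item:delta_qn_3spider}) the task is to grow, from the image of an attachment vertex (the root, for spiders), a rainbow path of the prescribed length --- or several such paths, one per leg --- while avoiding the vertices and colours already committed. Since this length can be as large as $k$ when $T$ is itself a path, far beyond the $\approx\tfrac{2}{3}k$ that a greedy or longest-rainbow-path argument delivers even using $K_{2,3}$-freeness, I would run a colour-tracking rotation--extension argument: whenever a rainbow path cannot be extended at its endpoint, every blocking edge either repeats an already-used colour or folds back to a vertex at odd distance along the path, and rotating along such a fold-back produces a rainbow path of the same length with a new endpoint; iterating enlarges the set of reachable endpoints until an extension becomes available. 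The parity hypotheses on the legs (even length, or length exactly $3$) are precisely what make this bookkeeping close leg by leg, so that the later legs can still be attached to the root even though they begin with most colours already spent. For the leaves case (item \ref{item:delta_qn_leaves}) one first embeds the core $T'$ and then attaches the $\ell$ pendant vertices; as the core-images are pairwise close they have nearly disjoint neighbourhoods in $Q_n$, and the required free neighbours can be selected as a rainbow system of distinct representatives by a Hall-type argument, using that properness of the colouring makes the edges at any single core-image automatically distinctly coloured. The star is the degenerate instance, immediate since every properly coloured star is rainbow.

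The main obstacle throughout is beating the factor-$2$ loss of the naive rainbow greedy: one must show that the at most $k-1$ committed colours and at most $k-1$ committed vertices cannot simultaneously block every continuation inside $H\subseteq Q_n$, with only the cube's sparse-neighbourhood geometry as leverage. This is tractable while the ``simple'' part of $T$ dominates, so that only about $\tfrac{3}{4}k$ (pendant path) or $\tfrac{1}{2}k$ (leaves) colours are in play at the critical moment, but it degrades for a general $k$-edge tree, where the committed palette may approach size $k$ before the embedding is finished --- mirroring the difficulty of the classical Erd\H{o}s--S\'os conjecture.
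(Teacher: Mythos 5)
Your lower bound is the paper's own and is fine. The upper bound, however, rests on two devices -- a ``colour-tracking rotation--extension'' argument to grow long rainbow paths, and a Hall-type selection of leaf neighbours -- and neither is actually made to work at the critical threshold $\delta(H)=k$; this is a genuine gap, not a presentational one. For the rotation step, your dichotomy (``every blocking edge either repeats an already-used colour or folds back'') ignores the bad interaction: a fold-back edge may itself carry a colour already on the path, and even when it does not, rotating deletes one edge but inserts another whose colour must avoid \emph{all} remaining path colours, so rainbowness is not preserved for free. Worse, in the regime you need (a rainbow path of length up to $k$ grown in a subgraph of minimum degree exactly $k$, with up to $k-1$ colours already committed), \emph{every} edge at the current endpoint can be colour-blocked, leaving no extension and no admissible rotation; you give no mechanism ruling this out, and this is exactly the hard rainbow-Hamiltonian-path regime (cf.\ the Maamoun--Meyniel colouring cited in the paper) that the parity hypotheses alone do not dissolve. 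The leaves case has the same problem in miniature: after avoiding the colours of the other $k-d_v$ tree edges, an attachment vertex $v$ of degree $k$ has only $d_v$ candidate neighbours left, so a single collision with another image vertex kills the count; your greedy embedding of the core gives no control over such adjacencies (the paper's tool for that control, Lemma \ref{lem:k2rfree_embedding}, needs minimum degree about $2k$, which you do not have), and ``nearly disjoint neighbourhoods'' is not justified.

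What your sketch misses is the paper's actual key idea, which sidesteps the factor-$2$ loss rather than beating it head-on: the embedding is \emph{not} required to avoid previously used vertices at all. Instead (Lemma \ref{lem:coordinate_lemma}) one prescribes, via an auxiliary graph on the tree edges with a budget of at most $k-j$ constraints on the $j$th edge, which pairs of image edges must use distinct hypercube coordinates; the greedy step then avoids only $(i-1)$ colours plus at most $k-i$ coordinates, which fits inside degree $k$. Injectivity comes afterwards for free from the geometry of $Q_n$: a closed walk of length $2\ell$ uses each coordinate an even number of times, hence at most $\ell$ distinct coordinates, while the chromatic-number condition imposed on every $2\ell$-edge path of $T$ forces $\ell+1$ distinct coordinates, a contradiction. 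The four families in the theorem (and their exact thresholds $\tfrac{3k-1}{4}$, $\tfrac{k-1}{2}$, even legs, legs of length $3$) are then just bookkeeping choices of the edge ordering and of the auxiliary graph satisfying that budget -- not, as in your plan, the residue of a path-rotation analysis. Without the coordinate mechanism or a genuine substitute for it, your outline does not yield $\deltar(Q_n,T)\le k-1$ even for the plain path.
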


\subsection{Paper Organization}
    
The rest of the paper is organized as follows. Since bounds on $\deltar$ imply weak bounds on $\exr$ it is slightly more streamlined to examine $\deltar$ first, and thus in Section \ref{sec:min_degree} we prove Theorem \ref{thm:delta_qn} and other auxillary results where we change the host graph to $K_n$. In Section \ref{sec:ex*_host} we prove a general result for $\exr$ (applying not only to $Q_n$ but many $K_3$-free hosts) which implies Theorem \ref{thm:ex_gen_intro}. 

\section{Rainbow Minimum Degree}\label{sec:min_degree}
In this section we will prove Theorem \ref{thm:delta_qn}, but first we will show, as claimed in Section \ref{sec:introduction}, that $\deltar(n,T)$ depends on the structure of $T$ and not only the number of edges.
\subsection{$K_n$ host}
\begin{prop}\label{prop:delta_n}
    We have that
    \begin{enumerate}
        \item $\deltar(G,T)<2k$ for any $k$ edge tree $T$. \label{item:delta_n_ub}
        \item $\deltar(n,K_{1,k})=k-1$ for the $k$ edge star. \label{item:delta_n_star}
        \item If $k=2^p-1$ for an integer $p\geq 2$ and $ (k+1) | n$, then $\deltar(n,P_{k})\geq k$ for the $k$ edge path. \label{item:delta_n_path}
    \end{enumerate}
\end{prop}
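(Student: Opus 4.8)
We need three separate statements. For part (1), the plan is to follow the folklore argument: if $H \subseteq G$ has $\delta(H) \geq 2k$, we greedily embed $T$ using a leaf-ordering $x_0, x_1, \dots, x_k$, at each step having embedded $x_0, \dots, x_{i-1}$ into distinct vertices of $H$ and wishing to extend to $x_i$, whose unique earlier neighbor is $x_{i'}$. Since $\deg_H(\text{image of }x_{i'}) \geq 2k > i-1 + (\text{number of used colors on that vertex})$, there is an edge to a new vertex whose color is distinct from all previously used colors (at most $k-1 < 2k - (i-1)$ of them). This makes the copy of $T$ rainbow, so $\deltar(G,T) < 2k$. I expect this to be routine; the only care needed is bookkeeping the count of forbidden vertices versus forbidden colors, but $2k$ is a generous bound so there is slack.

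For part (2), the upper bound $\deltar(n,K_{1,k}) \le k-1$ is immediate since any proper edge-coloring is automatically rainbow on a star, so a graph with $\delta \ge k$ contains a rainbow $K_{1,k}$; thus this reduces to the uncolored statement $\delta(n,K_{1,k}) = k-1$, witnessed by any $(k-1)$-regular graph on $n$ vertices (which exists when, say, $n$ is large enough, or one takes a suitable near-regular graph — I would just cite a disjoint union of $K_k$'s when $k \mid n$ and note the general case is standard).

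For part (3), the heart of the matter is the Maamoun–Meyniel construction of a proper edge-coloring of $K_{2^p}$ with no rainbow Hamiltonian path, cited in the excerpt. I would take $m = k+1 = 2^p$ and let $\phi$ be such a coloring of $K_m$. The host $H$ is a disjoint union of $n/m$ copies of $K_m$, each colored by a fresh palette via $\phi$; then $\delta(H) = m-1 = k$. The claim is that $H$ contains no rainbow $P_k$: a path with $k$ edges has $k+1 = m$ vertices, so it must lie entirely within one clique $K_m$ (edges between components don't exist), and there it is a Hamiltonian path of that $K_m$ — but $\phi$ was chosen to have no rainbow Hamiltonian path. Hence $\deltar(n,P_k) \ge \delta(H) = k$. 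The main obstacle is purely expository: stating the Maamoun–Meyniel coloring precisely enough (or citing it as a black box) and checking that a $k$-edge path on $m$ vertices inside one $K_m$ is genuinely a Hamiltonian path of that component — which is immediate from the vertex count $k+1 = m$. One should also note the hypothesis $(k+1) \mid n$ is exactly what makes the disjoint-cliques host an honest $n$-vertex graph, and $p \ge 2$ ensures $m = 2^p \ge 4$ so that Hamiltonian paths are long enough for the construction to be nontrivial.
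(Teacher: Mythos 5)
Your proposal is correct and follows essentially the same route as the paper: the same greedy leaf-ordering embedding with the $2(i-1)<2k$ count for part (1), the observation that stars are automatically rainbow (plus a $(k-1)$-regular witness) for part (2), and the Maamoun--Meyniel coloring of $K_{2^p}$ spread over disjoint cliques for part (3). No gaps; if anything you are slightly more explicit than the paper about the lower-bound construction in part (2) and the vertex-count check in part (3).
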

\begin{proof}[Proof of Proposition \ref{prop:delta_n} Part \ref{item:delta_n_ub}]
      Suppose that $G$ is properly colored by $\phi$ and has minimum degree $2k$. Let $\{x_0,\ldots,x_{k}\}$ be a leaf-ordering of the vertices of $T$ and $u_0u_1$ be any edge in $G$.  For $i=2,\dots,k$, inductively assume we have selected distinct vertices $u_0,\ldots,u_{i-1}$ such that for all $1\le j\le i-1$, $G$ contains the edge $u_{j'} u_{j}$ and moreover  $\phi(u_{j_1'}u_{j_1}) \neq \phi(u_{j_2'}u_{j_2})$ whenever $j_1 \neq j_2$.  Choose $u_i$ to be any neighbor of $u_{i'}$ such that
      \begin{enumerate}
          \item $u_i\notin \{u_0,\ldots,u_{i-1}\}$ and
          \item $\phi(u_{i'}u_{i}) \not \in \{\phi(u_0u_1),\dots,\phi(u_{(i-1)'}u_{i-1})\}$
      \end{enumerate}
      Note that at most $(i-1)+(i-1)< 2k$ neighbors of $u_{i'}$ fail one of these conditions, so by the minimum degree of $G$ we can find such a $u_i$. Inductively proceeding through the step $i=k$ gives a rainbow copy of $T$.
\end{proof}

\begin{proof}[Proof of Proposition \ref{prop:delta_n} Part \ref{item:delta_n_star}]
    A $K_{1,k}$ is always rainbow in a proper coloring.
\end{proof}

\begin{proof}[Proof of Proposition \ref{prop:delta_n} Part \ref{item:delta_n_path}]
      While it is well known that even-sized cliques have many $1$-factorizations, identifying one which avoids rainbow paths seems less trivial. The main result of \cite{MAAMOUN1984} is that $K_{2^p}$ can be colored so that it contains no rainbow Hamiltonian path. Therefore when $k=2^p-1$ we have $\deltar(n,P_{k})\geq k$; the result follows by taking unions of disjoint copies of this clique.
\end{proof}

\subsection{$Q_n$ host}

Since the union of $k-1$ matchings (each corresponding to a single coordinate) on $Q_n$ is a $(k-1)$-regular subgraph with a~$(k-1)$-color proper edge coloring, it is immediate that $\deltar(Q_n,T)\geq k-1$ for any $k$ edge tree $T$. Proving equality for various trees $T$ is the focus of this section, proving the various cases of Theorem \ref{thm:delta_qn}.

The basic idea is to pick an ordering of the vertices of $T$ and greedily attempt to embed each vertex one at a time. We need to use our minimum degree condition to avoid
\begin{enumerate}
    \item choosing a vertex already assigned to a vertex of $T$ or
    \item choosing as an edge one whose color is already used on an edge of $T$.
\end{enumerate}
The apparent issue is that minimum degree $k$ is not enough to guarantee both these things, and our approach is to relax the first condition by using the structure of the hypercube. If nearby edges do not share coordinates, the resulting embedding will remain a tree. The following Lemma formalizes this process; we will use it to prove the component parts of Theorem \ref{thm:delta_qn}.
\begin{lem}\label{lem:coordinate_lemma}
    Let $T$ be a $k$ edge tree with $\{x_0,\dots,x_k\}$ a leaf ordering $T$ and edges $e_i=x_{i'}x_i$. Suppose $H_0$ is an auxiliary graph on vertex set $V(H_0)=\{e_1,\dots,e_k\}$ so that for all $j \in [k]$
        \begin{equation*}
            |\{i<j \text{ s.t. } e_ie_j\in E(H_0) \} |\leq k-j\,.
        \end{equation*}
        Then define $H$ as the graph obtained by adding to $H_0$ all edges of the form $e_ie_j$ with $i \neq j$ and $e_i\cap e_j \neq \emptyset$. Suppose furthermore that every path $P$ of length $2\ell$ in $T$, composed of edges $e_{i_1},\dots,e_{i_{2\ell}}$, that the induced subgraph of $H$ on vertices $\{e_{i_1},\dots,e_{i_{2\ell}}\}$ has chromatic number $\chi \geq \ell+1$.

    Then $\deltar(Q_n,T)\leq k-1$.
\end{lem}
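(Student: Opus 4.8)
The inequality $\deltar(Q_n,T)\le k-1$ says precisely that every subgraph $G\sub Q_n$ with $\del(G)\ge k$ carrying a proper edge colouring $\phi$ contains a rainbow copy of $T$, so I would fix such a $G$ and $\phi$ and build the rainbow $T$ greedily in the leaf order $x_0,x_1,\dots,x_k$. Place $x_0$ at an arbitrary vertex; having embedded $x_0,\dots,x_{j-1}$ as $v_0,\dots,v_{j-1}$, choose the image $v_j$ of $x_j$ among the $G$-neighbours of $v_{j'}$ so that \emph{(a)} the colour $\phi(v_{j'}v_j)$ differs from the at most $j-1$ colours already used on embedded edges and \emph{(b)} the hypercube coordinate $c_j$ flipped by $v_{j'}v_j$ differs from $c_i$ for every $i<j$ with $e_ie_j\in E(H_0)$. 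By the defining inequality of $H_0$ there are at most $k-j$ such indices $i$, and distinct coordinates out of a fixed vertex of $Q_n$ name distinct neighbours, so at most $(j-1)+(k-j)=k-1<k\le\del(G)$ neighbours of $v_{j'}$ are excluded and a legal $v_j$ exists. This produces a graph homomorphism $\psi\colon T\to Q_n$ with image in $G$ and with all edge-colours distinct; the remaining task is to upgrade $\psi$ to an honest, i.e.\ injective, embedding.

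For injectivity I would argue by minimal counterexample. If $\psi$ is not injective, pick $a\ne b$ with $\psi(x_a)=\psi(x_b)$ minimising the tree distance $d_T(x_a,x_b)$, which is even since $Q_n$ is bipartite, say $d_T(x_a,x_b)=2\ell$ with $\ell\ge1$. Minimality forces the $\psi$-image of the unique path from $x_a$ to $x_b$ in $T$ to be a cycle $C$ of length $2\ell$ in $Q_n$ with no repeated vertices, since any coincidence among interior images would yield a strictly shorter bad pair. Two elementary hypercube facts then finish the argument: every coordinate is flipped an even number of times along any closed walk, so the $2\ell$ edges of $C$ use at most $\ell$ distinct coordinates; and a cycle in $Q_n$ never backtracks, so consecutive edges of $C$ use different coordinates. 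Hence, writing $e_{i_1},\dots,e_{i_{2\ell}}$ for the edges of that tree path, the assignment $e_{i_t}\mapsto c_{i_t}$ is a \emph{proper} colouring of the subgraph of $H$ induced on $\{e_{i_1},\dots,e_{i_{2\ell}}\}$: for an $H_0$-edge between two of these, restriction \emph{(b)} at the larger index forces different coordinates; for an ``added'' edge of $H$ the two tree edges share a vertex, hence are consecutive along the path (in a simple path only consecutive edges are incident), hence consecutive on $C$, and so again get different coordinates. That induced subgraph of $H$ therefore has chromatic number at most $\ell$, contradicting the hypothesis that it is at least $\ell+1$; so $\psi$ is injective, and by \emph{(a)} it is a rainbow copy of $T$ in $G$.

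I expect the only delicate point to be the bookkeeping around the minimal counterexample: one must verify that minimality genuinely turns the image of the tree path into a vertex-disjoint cycle (not merely a closed walk), and then that the two notions of adjacency match up, namely that an edge of $H$ coming from two incident tree edges automatically links consecutive edges of $C$. The $H_0$ part is immediate from restriction \emph{(b)}, and the rest is routine: the feasibility count $(j-1)+(k-j)<k$, the evenness of $d_T$, and the observation that the $\ell=1$ instance of the hypothesis is automatic (consecutive tree edges are always adjacent in $H$), which is exactly what forbids $\psi$ from stepping back to a grandparent in a single move.
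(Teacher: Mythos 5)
Your proposal is correct and follows essentially the same route as the paper: the same greedy embedding with the $(j-1)+(k-j)<k$ count, followed by the observation that a closed walk in $Q_n$ repeats coordinates evenly, contradicting the chromatic-number hypothesis on the induced subgraph of $H$. The only (harmless) difference is bookkeeping: you impose coordinate constraints only for $H_0$-neighbours and recover coordinate-distinctness of incident tree edges from properness of $\phi$ (note for the degenerate $\ell=1$ case the correct reason is condition (a) — a backtrack would reuse an edge and hence a colour — not the automatic $H$-adjacency), whereas the paper imposes them for all $H$-neighbours and observes the extra ones are already excluded by the colour condition.
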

\begin{proof}
    Let $G\subset Q_n$ have minimum degree $k$ and proper edge coloring $\phi$.
    First we embed $x_0$ to an arbitrary vertex $v_0$, before iteratively embedding vertex $x_i$ to some vertex $v_i$, for $1 \leq i \leq k$. Let $f_i$ denote the images of $e_i$ under our embedding. To select $v_i$ we choose a neighbor of $v_{i'}$ so that
    \begin{enumerate}
        \item $\phi(v_{i'}v_i) \not \in \{\phi(f_1),\dots,\phi(f_{i-1}) \}$ and
        \item if $e_i$ and $e_j$ are adjacent in $H$,  $c(v_{i'}v_i) \neq c(f_j)$.
    \end{enumerate}
    First we check that selecting such an $v_i$ is possible. We disjointly partition $[i-1] = A \dot{\cup} B$ by setting
    \begin{equation*}
        A:=\{j \in [i-1] \ : \ x_{i'} \in e_j  \}\,,
    \end{equation*}
    the indices whose edges intersect $x_{i'}$, and taking the complement $B = [i-1]\setminus A$. The vertex $v_{i'}$ has degree at least $k$. The first condition eliminates at most $i-1$ neighbors. The second condition eliminates at most $k-i$ edges from $H_0$, and those from $H \setminus H_0$ are in $A$ and already discarded.In total this is
    $(i-1)+(k-i) < k$
    edges and we can always find such a $v_i$.

    It remains to check that when this algorithm concludes selecting the $v_k$ we have a rainbow embedding of $T$. That the embedding is rainbow is evident since no color is used twice. To verify that no cycle has been created, suppose for the sake of contradiction that we obtained a cycle - it must have even length $2\ell$ since $Q_n$ is bipartite. Let the vertices of the cycle be $v_{i_1},\dots,v_{i_{2\ell}}$, with preimages in $T$ $x_{i_1},\dots,x_{i_{2\ell}}$. Whenever $e_ie_j\in E(H)$ our algorithm ensures $c(f_i)\neq c(f_j)$. Therefore any assignment of coordinates to the edges of $f_{i_1},\dots,f_{i_{2\ell}}$ (that is, a proper vertex coloring of this induced subgraph of $H$) requires $\ell+1$ colors. However, a cycle of length $2\ell$ in $Q_n$ must have an even number of edges utilizing each coordinate, and this is a contradiction. 
\end{proof}

While Lemma \ref{lem:coordinate_lemma} may seem overtechnical it will make the proof of Theorem \ref{thm:delta_qn} much smoother, since now we only need to verify the given hypotheses for various classes of trees $T$. As an expository example we first do so in the case that $T$ is the $k$-edge path. Keep the natural ordering $\{x_0,x_1,\dots,x_k\}$, and, for the graph $H_0$, connect edge $e_i$ to the preceding $k-i$ edges in the path (or all, if there are not this many), skipping the immediately preceding edge $e_{i-1}$ which will appear in $H$. Formally, in $H_0$ we draw the edge from $e_i$ to each of $\{e_{j},\dots,e_{i-2}\}$ where $j = \max(2i-k-1,1)$.

Now suppose that $P=\{e_i,\dots,e_{i-1+2\ell}\}$ is a path in $T$ of length $2\ell$. We claim that in $H$ the vertices $\{e_i,\dots,e_{i+\ell}\}$ form an $(\ell+1)$-clique. For $e_r,e_s$ with $i \leq r < s \leq i+\ell$, we have that $e_s$ and $e_r$ are adjacent in $H$ whenever $2s-k-1 \leq r$, which holds since
\begin{equation*}
    2s-k-1 \leq 2i+2\ell-k-1 \leq i+k-k \leq i \leq r
\end{equation*}
so we are done.

The key difference in arguing other trees $T$ will be constructing the ordering and defining $H_0$. We can consider the upper triangular portion of the adjacency matrix of $H$, found in the first panel of Figure \ref{fig:path_figure}. In the second panel we graphically represent the pattern of $1$s in the adjacency matrix using black regions. While not a formal tool, these pictograms will help clarify the more delicate cases remaining in Theorem \ref{thm:delta_qn}. Viewing each column as the edges in $T$ whose coordinates a new edge can avoid, the second panel makes it immediately clear that the first $q=\floor{(k+1)/2}$ edges can be embedded using entirely distinct coordinates, but our strength to do so wanes as we approach the tip of the path. While not a formal tool, these pictograms will help clarify the more delicate cases remaining in Theorem \ref{thm:delta_qn}. It will be important to recall in the other parts of Theorem \ref{thm:delta_qn} that a path in $T$ may not appear consecutively in our edge ordering and this will make casework slightly more complicated. 

\begin{figure}[!tbp]
  \begin{subfigure}[b]{0.4\textwidth}
    \[
A(H_0) = \begin{pmatrix}
 0& 1 & 1 & \cdots & 1 & 1 & 0 & \cdots & 0 & 0 & 0 & 0 \\
  & 0 & 1 & \cdots & 1 & 1 & 0 & \cdots & 0 & 0 & 0 & 0 \\
  &   & 0 & \cdots & 1 & 1 & 1 & \cdots & 0 & 0 & 0 & 0 \\
  &   &   & \ddots & \vdots & \vdots & \vdots & \ddots & \vdots & \vdots & \vdots & \vdots \\
  &   &   &        & 0 & 1 & 1 & \cdots & 0 & 0 & 0 & 0 \\
  &   &   &        &   & 0 & 1 & \cdots & 0 & 0 & 0 & 0 \\
  &   &   &        &   &   & 0 & \cdots & 0 & 0 & 0 & 0 \\
  &   &   &        &   &   &   & \ddots & \vdots & \vdots & \vdots & \vdots \\
  &   &   &        &   &   &   &        & 0 & 1 & 1 & 0 \\
  &   &   &        &   &   &   &        &  & 0 & 1 & 0 \\
  &   &   &        &   &   &   &        &  &  & 0 & 1 \\
  &   &   &        &   &   &   &        &  &  &  & 0 \\
\end{pmatrix}
\]
    \caption{The (upper triangular) adjacency matrix of $H$ for $T=P_{k+1}$}

  \end{subfigure}
  \hfill
  \begin{subfigure}[b]{0.4\textwidth}
    
\begin{tikzpicture}[scale=1.4]
   
    \draw[thick] (0,0) rectangle (4,4);
    
    \filldraw[black] (0,4) -- (4,0) -- (2,4) -- cycle;
\end{tikzpicture}
    \caption{Pictogram representing the adjacency matrix $A(H)$}
  \end{subfigure}
  \caption{}\label{fig:path_figure}
\end{figure}

\begin{proof}[Proof of Theorem \ref{thm:delta_qn} Part \ref{item:delta_qn_pendant}]

    The case that $T$ is a path was handled above so we assume that $T$ is a pendant path with $m\geq \frac{3k-1}{4}$ edges, and list the vertices of the path part as $\{x_0,\dots,x_m\}$. Let $T'$ denote the part of $T$ attached at, say, $x_0$ with $k-m$ edges. Let $\{x_0,\dots,y_{k-m}\}$ be a leaf-ordering of $T'$, and set as above $q=\floor{(k+1)/2}$. For our vertex ordering we take $\{x_0,\dots,x_q,y_1,\dots,y_{k-m},x_{q+1},\dots,x_m\}$. As discussed above this choice of $q$ allows us to make $\{x_0x_1,\dots,x_{q-1}x_q \}$ a clique in $H$. For the edges of $T'$ of the form $\{y_{i'}y_i\}$ we take as edges in $H_0$ the (at most) $i-2$ earlier edges of $T'$ which do not intersect $\{y_{i'}y_i\}$, together with the edge $\{x_0x_1\}$. This required $d_{H_0}(y_{i'}y_i)=i-2+1=i-1$, and this is possible since
    \begin{equation*}
        i-1\leq k-m-1 \leq k/4-3/4 \leq k-\frac{k+1}{2}-(k/4+1/4) \leq k-q-i.
    \end{equation*}
    Finally for every $1\leq i \leq m-q$, we connect in $H_0$ the edge $x_{q+i-1}x_{q+i}$ to all of the edges in the set
    $\{x_{2q+2i-2-2m}x_{2q+2i-1-2m},\dots,x_{q+i-3}x_{q+i-2}\}$. That is, we complete the remainder of the long path exactly as we completed the path above.

    Now let $P$ be a path in $T$ of length $2\ell$. If $P$ is contained within the $T \setminus T'$ then it has $\chi\geq \ell+1$ by identical analysis to the path case. If it is contained within the attached part $T'$ then we see a clique in $H$, so we may assume it intersects both $T'$ as well as $T \setminus T'$. Since both $T'$ and the initial segment of the pendant path form cliques in $H$, we are still immediately done unless $P$ intersects both sides in $\ell$ edges, in which case the $(\ell+1)$st coordinate is guaranteed by the edge $x_0x_1$ which is complete to the edges of $T'$. The adjacency of $H$ is diagrammed in the first panel of Figure $\ref{fig:pendant_leaves}$.
\end{proof}

\begin{proof}[Proof of Theorem \ref{thm:delta_qn} Part \ref{item:delta_qn_leaves}]
     Let $T'$ denote the tree obtained by deleting the leaves of $T$, and suppose $T'$ has $m$ edges remaining. For our ordering, let $\{x_0,x_1,\ldots,x_m\}$ denote an arbitrary ordering of the vertices of $T'$, and $\{x_{m+1},\dots,x_k\}$ be an arbitrary ordering of the leaves of $T$.

     By hypothesis $m \leq k-\frac{k-1}{2}$ and therefore as before we can make the edges of $T'$ a clique in $H$. Denoting the neighbor of $x_{m+i}$ in $T'$ as $x_{(m+i)'}$, then we connect (in $H_0$) edge $x_{(m+i)'}x_{m+i}$ to all edges of the form $x_r x_s$ with $x_rx_{(m+i)'} \in T'$ and $(m+i)'<s \leq m$ and $x_s$ having at least one leaf in $T$. This is permissible under the degree requirement for $H_0$ because each such $x_rx_s$ injects to an as-of-yet unembedded leaf of $T$.

     Now suppose that $P$ is a path of length $2\ell$ in $T$. Since any path can only meet two leaves of $T$, and $T'$ is a clique in $H$, we are immediately done unless $\ell=2$ and we intersect two leaves of $T$. But then the edge in $H_0$ from one leaf edge to the neighbor in $T'$ of the other gives $\chi \geq 3$ as needed. The adjacency of $H_0$ is diagrammed in the second panel of Figure $\ref{fig:pendant_leaves}$.

     \begin{figure}[!tbp]
  \begin{subfigure}[b]{0.4\textwidth}
     
\begin{tikzpicture}[scale=1.4]
   
    \draw[thick] (0,0) rectangle (4,4);

    \filldraw[black] (0,4) -- (2,2) -- (2,4) -- cycle;
    \filldraw[black] (2,3.75) -- (2.5,3.75) -- (2.5,4) -- (2,4) -- cycle;
    \filldraw[black] (2.5,2) -- (2,2) -- (2.5,1.5) -- cycle;
    \filldraw[black] (2.5,1.5) -- (4,0) -- (3.25,1.5) -- cycle;
    \filldraw[black] (2.5,3.25) -- (2.5,2) -- (3.25,2) -- cycle;

    \draw[decorate,decoration={brace,amplitude=10pt,mirror},thick] (2,4.1) -- (0,4.1)
        node[midway,above=10pt] {$q$};
    \draw[decorate,decoration={brace,amplitude=10pt,mirror},thick] (2.5,4.1) -- (2,4.1)
        node[midway,above=10pt] {$T'$};
    \draw[decorate,decoration={brace,amplitude=10pt,mirror},thick] (4,4.1) -- (2.5,4.1)
        node[midway,above=10pt] {$m-q$ left};

    \draw[decorate,decoration={brace,amplitude=10pt,mirror},thick] (0,4.0) -- (0,3.75)
        node[midway,left=10pt] {$x_0x_1$};
    \draw[decorate,decoration={brace,amplitude=10pt,mirror},thick] (0,2) -- (0,1.5)
        node[midway,left=10pt] {$T'$};
\end{tikzpicture}
    \caption{Pictogram for $T$ a pendant path.}
  \end{subfigure}
  \hfill
   \begin{subfigure}[b]{0.4\textwidth}
     
\begin{tikzpicture}[scale=1.4]
    
    \draw[thick] (0,0) rectangle (4,4);

    \filldraw[black] (0,4) -- (1.5,2.5) -- (1.5,4) -- cycle;
    \filldraw[black] (1.5,3.5) -- (2,3.5) -- (2,3.75) -- (1.5,3.75) -- cycle;
    \filldraw[black] (2,3.25) -- (2.5,3.25) -- (2.5,3.5) -- (2,3.5) -- cycle;
    \filldraw[black] (3.25,2.5) -- (3.75,2.5) -- (3.75,2.75) -- (3.25,2.75) -- cycle;
    \filldraw[black] (3.25,3) -- (3.75,3) -- (3.75,3.25) -- (3.25,3.25) -- cycle;

      \draw[decorate,decoration={brace,amplitude=10pt,mirror},thick] (1.5,4.1) -- (0,4.1)
        node[midway,above=10pt] {$m$};
     \draw[decorate,decoration={brace,amplitude=10pt,mirror},thick] (4,4.1) -- (1.5,4.1)
        node[midway,above=10pt] {leaves};
        
\end{tikzpicture}
    \caption{Pictogram for $T$ with many leaves.}
  \end{subfigure}
  \caption{}\label{fig:pendant_leaves}
\end{figure}

\end{proof}

\begin{proof}[Proof of Theorem \ref{thm:delta_qn} Part \ref{item:delta_qn_espider}]
     Suppose the legs of the given spider have length $2k_1,\ldots,2k_t$. We will label the vertices of the spider as $x_0$ the root together with $x_i^{(s)}$, where $1 \leq s \leq t$ and $1 \leq i \leq 2k_s$. For our ordering we take $\{x_0,x_1^{(1)},\dots,x_{k_1}^{(1)},x_{1}^{(2)},\dots,x_1^{(s)},\dots,x_{k_s}^{(s)},x_{k_1+1}^{(1)},\dots,x_{2k_1}^{(1)},\dots,x_{2k_1}^{(s)},\dots,x_{2k_s}^{(s)}\}$. Therefore the first $k_1+\dots+k_s$ edges, the first half of each leg closest to the root, may be taken to be a clique in $H$. For each $s=1,\dots, t$, and each $i=k_1+1,\dots,2k_s$, in $H_0$ we connect the edge $ x_{i-1}^{(s)}x_i^{(s)}$ to the edges $\{x^{(s)}_{2i-2k_s-2}x^{(s)}_{2i-2k_s},\dots,x^{(s)}_{i-3}x^{(s)}_{i-2} \}$ on the same arm (analogous to the path case), as well as the edges $\{x_0x^{(r)}_1,\dots,x^{(r)}_{k_s-1}x^{(r)}_{k_s}\}$ for $r>s$. This is a total of
     \begin{equation*}
         (i-2i+2k_s)+k_{s+1}+\dots+k_t = 2k_s-i+\sum_{r=s+1}^{t}k_r
     \end{equation*}
    edges, satisfying the degree condition of Lemma \ref{lem:coordinate_lemma}.

    Now let $P$ be a path of length $2\ell$. If $P$ intersects only one arm of the spider we are done by identical analysis to the path case. Suppose then that $P$ intersects two arms, say arms $r$ and $s$ with $a$ and $b$ edges on arm $r$ and arm $s$ respectively. Since the first $k_r$ and $k_s$ edges of these two arms have totally distinct coordinates, and $a+b = \ell$, we are clearly done unless $a>k_r$ or $b>k_s$. On the other hand since $a\leq 2k_r$ and $b \leq 2k_s$ we are also done by the clique of size $k_r+k_s$ unless we are in the extreme case $a=2k_r$ and $b=2k_s$. In this case $P$ sees two entire arms and the fact that the second $k_s$ edges of arm $s$ are complete to the $k_r$ clique from the first arm suffices to finish. The adjacency of $H$ is diagrammed in the first panel of Figure $\ref{fig:spiders}$.
\end{proof}
\begin{proof}[Proof of Theorem \ref{thm:delta_qn} Part \ref{item:delta_qn_3spider}]

    We use very similar notation to the previous problem, with vertices $x_0$ and $x^{(s)}_1,x^{(s)}_2,x^{(s)}_3$ for $1 \leq s \leq t$. For our ordering we take $\{x_0,x^{(1)}_1,x^{(2)}_1,\dots,x^{(t)}_1,x^{(1)}_2,\dots,x^{(t)}_2,x^{(t)}_3,\dots,x^{(1)}_3\}$. Note that the outermost $t$ edges will be embedded in reverse.

    As we have seen before, for $m=\floor{\frac{3t+1}{2}}-t$ we can make the first $t+m$ edges a clique in $H$ (the $t$ shift will be convenient for indexing). For the remaining edges in the second row, $\{x^{(m+1)}_1x^{(m+1)}_2,\dots,x^{(t)}_1x^{(t)}_2 \}$, we connect in $H_0$ edges $ x^{(i)}_1x^{(i)}_2$ to all $t$ edges in the bottom row (with the exception of $x_0x^{(i)}_1$ which will appear in $H$) as well as the first $t-i$ edges of the second row, namely $\{x^{(1)}_1x^{(1)}_2,\dots,x^{(t-i)}_1x^{(t-i)}_2 \}$, for $1 \leq i \leq t$. For an edge in the third layer $x^{(i)}_2x^{(i)}_3$ we connect it in $H_0$ to $m+t-i$ edges in the middle row, namely $\{x^{(i-t+1)}_1x^{(i-t+1)}_2,\dots,x^{(m)}_1x^{(m)}_2\}$ as well as the $t-m$ edges from the first row, $\{x_0x^{(m+1)}_1,\dots,\dots,x_0x^{(t)}_1\}$. The remaining $m$ edges $x^{(i)}_2x^{(i)}_3$ for $1 \leq i \leq m$ we do not connect to anything. These choices are made so that for an arm $s$ with $m <s$ has, for each arm $r\leq m$, either the second or third edge of arm $s$ using different coordinates than the second edge in arm $r$.

    Now let $P$ be a path of length $2\ell$. Since $P$ passes through $x_0$ at most once we may assume that it is contained in two arms, $\{x_0,x^{(r)}_1,x^{(r)}_2,x^{(r)}_3 \}$ and $\{x_0,x^{(s)}_1,x^{(s)}_2,x^{(s)}_3 \}$ with $1 \leq r < s \leq t$. We consider cases based on the values of $r$ and $s$. In each case we must verify that every induced subgraph on $4$ consecutive vertices has $\chi\geq 3$ and the subgraph induced on all $6$ vertices has $\chi \geq 4$. Proceeding from top to bottom in Figure \ref{fig:spider_arms} we see these $6$-vertex graphs for the four cases,
    \begin{enumerate}
        \item $r,s\leq m$,
        \item $m <r,s$,
        \item $r\leq m < s$ with $r \leq t-s $,
        \item and finally $r\leq m < s$ with $r>t-s$.
    \end{enumerate}
    In each case verifying the required property for $\chi$ can be done by inspection.

    The adjacency of $H_0$ is diagrammed in the second panel of Figure $\ref{fig:spiders}$. 
    
    \begin{figure}[!tbp]
  \begin{subfigure}[b]{0.4\textwidth}
     
\begin{tikzpicture}[scale=1.4]
    
    \draw[thick] (0,0) rectangle (4,4);

    \filldraw[black] (0,4) -- (2,2) -- (2,4) -- cycle;
    \filldraw[black] (2,4) -- (2.5,3) -- (3,3) -- (3,2) -- (2.5,2) -- (3,1) -- (2,2) -- cycle;
    \filldraw[black] (3,3) -- (3.25,2.5) -- (3.5,2.5) -- (3.5,2) -- (3,2) -- cycle;
    \filldraw[black] (3,1) -- (3.25,1) -- (3.5,0.5) -- cycle;
    \filldraw[black] (3.5,0.5) -- (3.75,0.5) -- (4,0) -- cycle;
    \filldraw[black] (3.5,2.5) -- (3.75,2) -- (3.5,2) -- cycle;

    \draw[decorate,decoration={brace,amplitude=10pt,mirror},thick] (2,4.1) -- (0,4.1)
        node[midway,above=10pt] {$m$};
    \draw[decorate,decoration={brace,amplitude=10pt,mirror},thick] (3,4.1) -- (2,4.1)
        node[midway,above=10pt] {$k_1$};
    \draw[decorate,decoration={brace,amplitude=10pt,mirror},thick] (3.5,4.1) -- (3,4.1)
        node[midway,above=10pt] {$k_2$};
    \draw[decorate,decoration={brace,amplitude=10pt,mirror},thick] (4,4.1) -- (3.5,4.1)
        node[midway,above=10pt] {$k_3$};
\end{tikzpicture}
    \caption{Pictogram for $T$ an even spider.}
  \end{subfigure}
  \hfill
   \begin{subfigure}[b]{0.4\textwidth}
     
\begin{tikzpicture}[scale=1.4]
    
    \draw[thick] (0,0) rectangle (4,4);

    \filldraw[black] (0,4) -- (2,2) -- (2,4) -- cycle;
    \filldraw[black] (2,4) -- (2.66,4) -- (2.66,2.66) -- (2,2) -- cycle;
    \filldraw[black] (2.66,4) -- (2.66,3.33) -- (3.333,3.33) -- cycle;
    \filldraw[black] (2.66,3.333) -- (2.66,2.666) -- (3.333,2.666) -- (3.333,3.333) -- cycle;

   \draw[decorate,decoration={brace,amplitude=10pt,mirror},thick] (2,4.1) -- (0,4.1)
        node[midway,above=10pt] {$t+m$};
    \draw[decorate,decoration={brace,amplitude=10pt,mirror},thick] (2.666,4.1) -- (2,4.1)
        node[midway,above=10pt] {$t-m$};
    \draw[decorate,decoration={brace,amplitude=10pt,mirror},thick] (4,4.1) -- (2.666,4.1)
        node[midway,above=10pt] {$t$};

    \draw[decorate,decoration={brace,amplitude=10pt,mirror},thick] (0,4.0) -- (0,2.666)
        node[midway,left=10pt] {$t$};
    \draw[decorate,decoration={brace,amplitude=10pt,mirror},thick] (0,2.666) -- (0,2)
        node[midway,left=10pt] {$m$};
\end{tikzpicture}
    \caption{Pictogram for $T$ a $3$-spider.}
  \end{subfigure}
  \caption{}\label{fig:spiders}
\end{figure}

\begin{figure}
\begin{center}
    
\begin{tikzpicture}

\coordinate (v1) at (-6,1.5);  
\coordinate (v2) at (-4,0.5);  
\coordinate (v3) at (-1.5,0);  
\coordinate (v5) at (1.5,0);  
\coordinate (v6) at (4,0.5);   
\coordinate (v7) at (6,1.5);

\filldraw (v1) circle (2pt) node[below] {$x^{(r)}_3x^{(r)}_2$};
\filldraw (v2) circle (2pt) node[below] {$x^{(r)}_2x^{(r)}_1$};
\filldraw (v3) circle (2pt) node[below] {$x^{(r)}_1x_0$};
\filldraw (v5) circle (2pt) node[below] {$x_0x^{(s)}_1$};
\filldraw (v6) circle (2pt) node[below] {$x^{(s)}_1x^{(s)}_2$};
\filldraw (v7) circle (2pt) node[below] {$x^{(s)}_2x^{(s)}_3$};

\draw (v1) -- (v2);
\draw (v2) -- (v3);
\draw (v3) -- (v5);
\draw (v5) -- (v6);
\draw (v6) -- (v7);

\draw(v2) -- (v5);
\draw(v2) -- (v6);
\draw(v3) -- (v6);
\draw(v2) -- (v6);

\end{tikzpicture}

\begin{tikzpicture}

\coordinate (v1) at (-6,1.5);  
\coordinate (v2) at (-4,0.5);  
\coordinate (v3) at (-1.5,0);  
\coordinate (v5) at (1.5,0);  
\coordinate (v6) at (4,0.5);   
\coordinate (v7) at (6,1.5);

\filldraw (v1) circle (2pt) node[below] {$x^{(r)}_3x^{(r)}_2$};
\filldraw (v2) circle (2pt) node[below] {$x^{(r)}_2x^{(r)}_1$};
\filldraw (v3) circle (2pt) node[below] {$x^{(r)}_1x_0$};
\filldraw (v5) circle (2pt) node[below] {$x_0x^{(s)}_1$};
\filldraw (v6) circle (2pt) node[below] {$x^{(s)}_1x^{(s)}_2$};
\filldraw (v7) circle (2pt) node[below] {$x^{(s)}_2x^{(s)}_3$};

\draw (v1) -- (v2);
\draw (v2) -- (v3);
\draw (v3) -- (v5);
\draw (v5) -- (v6);
\draw (v6) -- (v7);

\draw(v1) -- (v3);
\draw(v1) -- (v5);
\draw(v2) -- (v5);

\draw(v3) -- (v6);
\draw(v7) -- (v3);
\draw(v7) -- (v5);

\end{tikzpicture}

\begin{tikzpicture}

\coordinate (v1) at (-6,1.5);  
\coordinate (v2) at (-4,0.5);  
\coordinate (v3) at (-1.5,0);  
\coordinate (v5) at (1.5,0);   
\coordinate (v6) at (4,0.5);   
\coordinate (v7) at (6,1.5);

\filldraw (v1) circle (2pt) node[below] {$x^{(r)}_3x^{(r)}_2$};
\filldraw (v2) circle (2pt) node[below] {$x^{(r)}_2x^{(r)}_1$};
\filldraw (v3) circle (2pt) node[below] {$x^{(r)}_1x_0$};
\filldraw (v5) circle (2pt) node[below] {$x_0x^{(s)}_1$};
\filldraw (v6) circle (2pt) node[below] {$x^{(s)}_1x^{(s)}_2$};
\filldraw (v7) circle (2pt) node[below] {$x^{(s)}_2x^{(s)}_3$};

\draw (v1) -- (v2);
\draw (v2) -- (v3);
\draw (v3) -- (v5);
\draw (v5) -- (v6);
\draw (v6) -- (v7);

\draw(v2) -- (v5);
\draw(v2) -- (v6);
\draw(v3) -- (v6);
\draw(v5) -- (v7);

\end{tikzpicture}

\begin{tikzpicture}

\coordinate (v1) at (-6,1.5);  
\coordinate (v2) at (-4,0.5);  
\coordinate (v3) at (-1.5,0);  
\coordinate (v5) at (1.5,0);   
\coordinate (v6) at (4,0.5);   
\coordinate (v7) at (6,1.5);

\filldraw (v1) circle (2pt) node[below] {$x^{(r)}_3x^{(r)}_2$};
\filldraw (v2) circle (2pt) node[below] {$x^{(r)}_2x^{(r)}_1$};
\filldraw (v3) circle (2pt) node[below] {$x^{(r)}_1x_0$};
\filldraw (v5) circle (2pt) node[below] {$x_0x^{(s)}_1$};
\filldraw (v6) circle (2pt) node[below] {$x^{(s)}_1x^{(s)}_2$};
\filldraw (v7) circle (2pt) node[below] {$x^{(s)}_2x^{(s)}_3$};

\draw (v1) -- (v2);
\draw (v2) -- (v3);
\draw (v3) -- (v5);
\draw (v5) -- (v6);
\draw (v6) -- (v7);

\draw(v2) -- (v5);
\draw(v3) -- (v6);
\draw(v5) -- (v7);
\draw(v3) -- (v7);

\end{tikzpicture}

\caption{From top to bottom: 1) $r,s\leq m$ 2) $m <r,s$ 3) $r\leq m < s$ with $r \leq t-s $ 4) $r\leq m < s$ with $r>t-s$}\label{fig:spider_arms}
\end{center}
\end{figure}
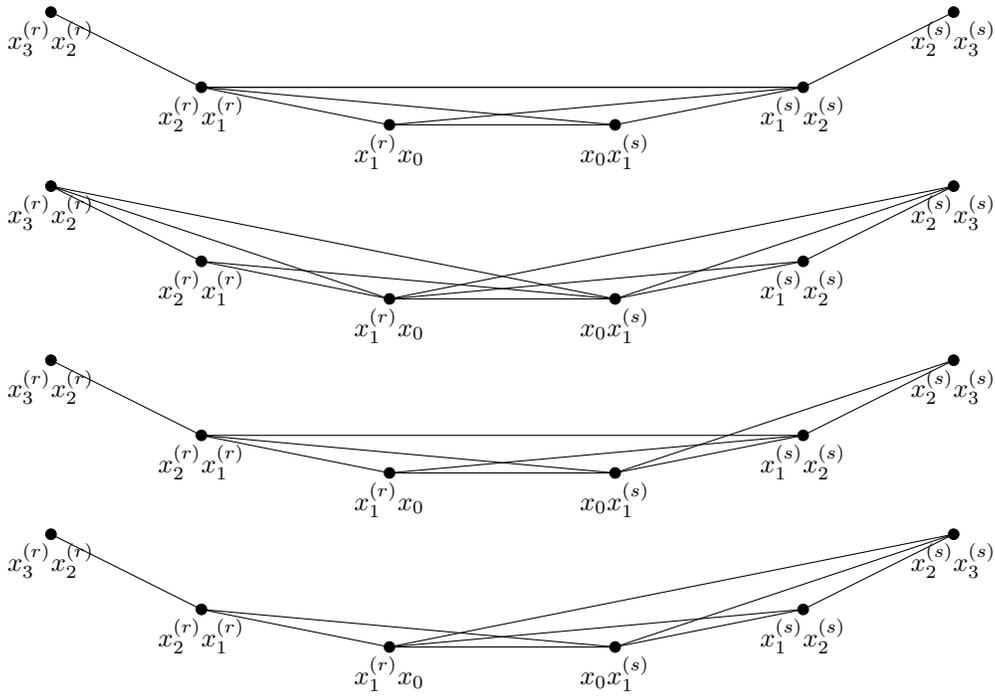

\end{proof}

\newpage

\section{Rainbow Extremal Number}\label{sec:ex*_host}
 Our results on $\exr$ will apply to $Q_n$ but also extend to a broad class of graphs and generally only depend on certain forbidden subgraphs. Namely we will prove the following, which gives Theorem \ref{thm:ex_gen_intro} since $Q_n$ satisfies the hypotheses of each result.

 \begin{thm}\label{thm:ex_gen}
    We have that
    \begin{enumerate}
    \item If $G$ is any graph and $T$ is a $k$-edge tree then $\exr(G,T)<(2k-1)|V(G)|$. \label{item:ex_gen_greedy}
    \item If $G$ is $K_3$-free then $\exr(G,P_3)\le  |V(G)|$.\label{item:ex_gen_p4}
    \item If $G$ is $\{K_3,K_{2,3}\}$-free then $\exr(G,P_4)\le \frac{3}{2} |V(G)|$. \label{item:ex_gen_p5}
    \item If $r \geq 3$ and $T$ is a $k$-edge tree with a vertex $x_0\in V(T)$ that is adjacent to $\ell> \frac{3}{4}k+\frac{r-3}{2}$ leaves, then any $K_{2,r}$-free graph $G$ has $\exr(G,T)\le \frac{k-1}{2}|V(G)|$. \label{item:ex_gen_leaves}
    \end{enumerate}
    \end{thm}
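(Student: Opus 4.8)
The four parts are largely independent, and parts~\ref{item:ex_gen_p4}--\ref{item:ex_gen_leaves} all reduce to understanding the local structure forced on a properly edge-coloured graph that avoids a rainbow copy of a short tree. Part~\ref{item:ex_gen_greedy} is the rainbow analogue of the folklore factor-$2$ bound and needs no new idea: if $H\subseteq G$ carries a proper colouring with no rainbow $T$ and $e(H)\ge(2k-1)|V(G)|\ge(2k-1)|V(H)|$, then repeatedly deleting a vertex of degree at most $2k-2$ yields a nonempty $H'\subseteq H$ (nonempty since the edges removed number fewer than $(2k-1)|V(H)|\le e(H)$) with $\delta(H')\ge 2k-1$, still properly coloured with no rainbow $T$; but the greedy embedding in the proof of Proposition~\ref{prop:delta_n} excludes at most $2(i-1)\le 2k-2$ neighbours at step $i$, so $\delta(H')\ge 2k-1$ already forces a rainbow $T$, a contradiction.

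For Part~\ref{item:ex_gen_p4} I would prove the structural statement that a proper colouring of a $K_3$-free $H$ with no rainbow $P_3$ has $e(H)\le|V(H)|$. In a $K_3$-free graph every $3$-edge path has four distinct vertices, so ``no rainbow $P_3$'' says exactly that $\phi(ab)=\phi(cd)$ for every path $a\text{-}b\text{-}c\text{-}d$. If $\deg(b)\ge 3$, fix $a\in N(b)$; for every other neighbour $x$ of $b$ and every $a'\in N(a)\setminus\{b\}$ (automatically $a'\notin N(b)$, else a triangle) the path $a'\text{-}a\text{-}b\text{-}x$ forces $\phi(a'a)=\phi(bx)$, and ranging over the $\ge 2$ choices of $x$ contradicts properness at $b$ unless $a$ is a leaf. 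Hence every vertex of degree $\ge 3$ has only leaf neighbours and lies in a star component, while every other component has maximum degree $\le 2$ and is a path or a cycle; in each case $e(\text{component})\le|V(\text{component})|$, giving $e(H)\le|V(H)|\le|V(G)|$.

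Part~\ref{item:ex_gen_p5} is the technical heart. I aim to show that any proper colouring of a $\{K_3,K_{2,3}\}$-free $H$ with no rainbow $P_4$ satisfies $e(H)\le\tfrac32|V(H)|$; note that in a $K_3$-free graph a $4$-edge walk is either a $P_4$ on five vertices or a $C_4$, so only genuine $P_4$'s are constrained. The plan is a local analysis near a vertex $v$ of large degree: if $v$ has a neighbour $u$ with a neighbour $w\ne v$ and $\deg(w)\ge 2$, then inspecting the $P_4$'s $w'\text{-}w\text{-}u\text{-}v\text{-}u_j$ over neighbours $u_j$ of $v$ and $w'\in N(w)\setminus\{u\}$, and using that two fixed colours can ``cover'' at most two of the distinct colours $\phi(vu_j)$, forces $\phi(w'w)=\phi(uv)$ for all such $w'$ and hence $\deg(w)\le 2$; when two short extensions of a path would land on a common vertex one invokes $K_{2,3}$-freeness to bound the relevant common neighbourhood. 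Carrying this out should show that every vertex of degree $\ge 4$ is forced to be surrounded by enough degree-$1$ and degree-$2$ vertices, after which a short discharging argument (assign charge $\deg v-3$ to each $v$; use that the leaf neighbours of distinct high-degree vertices are distinct) yields $e(H)\le\tfrac32|V(H)|$. Getting this casework right --- enumerating exactly which colour coincidences are forced and checking that the forbidden subgraphs kill the leftover configurations --- is the step I expect to be the main obstacle in the whole theorem.

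Finally, Part~\ref{item:ex_gen_leaves} is a rainbow, $K_{2,r}$-flavoured version of Sidorenko's broom embedding. Suppose $H\subseteq G$ has a proper colouring with no rainbow $T$ and $e(H)>\tfrac{k-1}2|V(G)|\ge\tfrac{k-1}2|V(H)|$. Deleting vertices of degree $\le\lfloor\tfrac{k-1}2\rfloor$ produces $H'\subseteq H$ with $\delta(H')\ge\lfloor\tfrac{k-1}2\rfloor+1$, and since such deletions never decrease the edge-to-vertex ratio, $H'$ still has average degree $>k-1$, hence a vertex $v_0$ with $\deg_{H'}(v_0)\ge k$. Let $T^-$ be $T$ with the $\ell$ leaves at $x_0$ removed, so $|E(T^-)|<\tfrac k4-\tfrac{r-3}2$, and set $d_0=\deg_{T^-}(x_0)$. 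Embed $x_0\mapsto v_0$, place the $d_0$ neighbours of $x_0$ in $T^-$ at neighbours of $v_0$, and then greedily embed the rest of $T^-$ into $V(H')\setminus N[v_0]$: extending from an already-placed vertex $u$, at most $r$ of its $\ge\delta(H')$ neighbours lie in $N[v_0]$ (here $K_{2,r}$-freeness bounds $|N(u)\cap N(v_0)|$), and previously used vertices and colours exclude only $O(|E(T^-)|)$ more, which the bound on $|E(T^-)|$ keeps below $\delta(H')-r$. Finally attach the $\ell$ leaves at $v_0$: of its $\ge k$ incident edges, $d_0$ are used by $T^-$, at most $k-\ell-d_0$ more are blocked by colours of deeper $T^-$-edges, and (since the deep part of $T^-$ avoids $N[v_0]$) no remaining neighbour of $v_0$ is an already-used vertex, leaving at least $\ell$ edges for the leaves. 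This produces a rainbow $T$, so $\exr(G,T)\le\tfrac{k-1}2|V(G)|$; the one delicate point is checking that the slack $\tfrac{r-3}2$ in the hypothesis is exactly what makes the greedy step fit, which I would verify by tracking the obstruction count through the leaf-ordering rather than bounding it crudely.
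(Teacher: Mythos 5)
Parts \ref{item:ex_gen_greedy}, \ref{item:ex_gen_p4} and \ref{item:ex_gen_leaves} of your proposal are essentially the paper's arguments (pass to a min-degree subgraph and greedily embed; classify components of a fork-free, triangle-free graph; embed the non-leaf part of $T$ away from $N[v_0]$ using $K_{2,r}$-freeness and then attach the leaves at $v_0$), and they go through --- for Part \ref{item:ex_gen_leaves} you do need the refinement you promise, e.g.\ not double-counting the already-placed neighbours of $v_0$ against the $N[v_0]$ exclusion, or else the crude bound $2(k-\ell-1)+r<k/2+1$ versus $\delta(H')\ge\lfloor (k+1)/2\rfloor$ leaves an off-by-one; the paper packages exactly this count as Lemma \ref{lem:k2rfree_embedding}.

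The genuine gap is Part \ref{item:ex_gen_p5}, which you yourself leave as a plan (``should show'', ``the step I expect to be the main obstacle''), and the plan as sketched does not close. The local forcing you describe --- from the paths $w'\!-\!w\!-\!u\!-\!v\!-\!u_j$ conclude $\phi(w'w)=\phi(uv)$ and hence $\deg(w)\le 2$ --- fails precisely in the critical regime $\deg(v)=4$: one of the $u_j$ may coincide with $w'$ (the walk degenerates into a $C_4$, which imposes no constraint), so only $\deg(v)-2=2$ genuine paths remain, and the two fixed colours $\phi(w'w),\phi(wu)$ can account for both coincidences without any contradiction. Since a properly coloured subgraph of average degree just above $3$ need not contain any vertex of degree $\ge 5$, the argument gives nothing exactly where it is needed; moreover it is not clear that your structural conclusion (high-degree vertices surrounded by degree-$\le 2$ vertices) plus discharging would even yield $\tfrac32|V(H)|$. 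The paper instead (i) uses the stronger output of Lemma \ref{lem:min_degree_subgraph} (every proper subset $S$ meets more than $\tfrac32|S|$ edges, so in particular no two adjacent degree-$2$ vertices) to pass to a subgraph in which every neighbour of a degree-$\ge 4$ vertex has degree $\ge 3$; (ii) proves Lemma \ref{lem:4_3path}, that a path $uvw$ with $\deg(u)\ge4$, $\deg(w)\ge3$ in a triangle-free graph forces a rainbow $P_4$; and (iii) for a degree-$4$ vertex $u$ with neighbours $v_1,\dots,v_4$ and second neighbourhoods $W_1,\dots,W_4$, runs a case analysis on the intersection pattern of the $W_i$ (a matching or a $C_4$, the only patterns surviving Lemma \ref{lem:4_3path} and $K_{2,3}$-freeness) to extract a rainbow $P_4$ directly. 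Some argument of this kind --- in particular an explicit use of $K_{2,3}$-freeness to control how the $W_i$ overlap --- is missing from your proposal, so Part \ref{item:ex_gen_p5} is not yet proved.
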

    
    It will prove useful to enforce a minimum degree condition. For a subset of vertices $S \subset V$ let $e_S:=e(S,S)+e(S,V\setminus S)$ denote the number of edges which intersect $S$ in either $1$ or $2$ vertices. The following Lemma is folklore but we include a proof for completeness.

 \begin{lem}\label{lem:min_degree_subgraph}
    Every nonempty graph $H$ with average degree $d=2e(H)/v(H)$ contains a subgraph $H'$ so that $H'$ has average degree at least $d$ and every $S \subsetneq V(H)$ has $e_S >d/2|S|$. In particular by letting $S$ be a single vertex we see that $\delta(H') >d/2$.
\end{lem}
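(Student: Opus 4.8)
The plan is to iterate the operation ``delete a bad set of vertices'' until none remains, in the familiar fashion of the folklore minimum-degree argument. Call a graph $J$ \emph{good} if every nonempty proper subset $S\subsetneq V(J)$ satisfies $e_S>\frac{d}{2}|S|$, where $e_S$ is computed inside $J$; this is exactly the property we want for $H'$, and applying it to a singleton $S=\{v\}$ (for which $e_S=\deg_J(v)$) recovers $\delta(J)>d/2$. We may assume $d>0$, since otherwise $H$ has no edges and any single vertex serves vacuously.

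Set $H_0:=H$. Given $H_i$, if $H_i$ is good we stop and put $H':=H_i$. Otherwise the negation of goodness supplies a nonempty proper subset $S_i\subsetneq V(H_i)$ with $e_{S_i}\le\frac{d}{2}|S_i|$, and we set $H_{i+1}:=H_i-S_i$, which is again nonempty because $S_i$ was a proper subset.

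The one computation to carry out is that the average degree never drops below $d$ along the process; this simultaneously delivers the conclusion ``$H'$ has average degree at least $d$'' and forces termination. Inductively assume $e(H_i)\ge\frac{d}{2}v(H_i)$, which holds at $i=0$ by the definition of $d$. Deleting $S_i$ removes precisely the $e_{S_i}$ edges meeting $S_i$ and the $|S_i|$ vertices of $S_i$, so
\begin{equation*}
    e(H_{i+1})=e(H_i)-e_{S_i}\ge \frac{d}{2}v(H_i)-\frac{d}{2}|S_i|=\frac{d}{2}\bigl(v(H_i)-|S_i|\bigr)=\frac{d}{2}v(H_{i+1}),
\end{equation*}
so the invariant is preserved. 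Since $v(H_{i+1})<v(H_i)$ strictly, the process halts after finitely many steps; and it cannot halt by running out of vertices, since the invariant together with $d>0$ forces each $H_i$ to contain an edge and hence at least two vertices, so deleting a proper subset always leaves a nonempty graph. Therefore it halts at a good graph, which is the desired $H'$.

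There is no genuine obstacle here; the only points deserving a moment's care are that $e_S$ for a singleton equals the degree (making the final ``in particular'' immediate), and that ``good'' should quantify over \emph{nonempty} proper subsets (for $S=\emptyset$ the inequality $0>0$ fails), which is the intended reading and the one the ``in particular'' clause uses.
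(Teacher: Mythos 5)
Your proof is correct and follows essentially the same route as the paper's: iteratively delete a violating set $S$ with $e_S\le \frac{d}{2}|S|$ and observe that the average degree (equivalently the invariant $e(H_i)\ge\frac{d}{2}v(H_i)$) is preserved, so the process terminates at a nonempty good subgraph. Your write-up is in fact a bit more careful than the paper's about termination, nonemptiness, and the degenerate case $d=0$, but there is no substantive difference in approach.
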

\begin{proof}
     We iteratively delete sets of verties $S$ with $e_S \leq d/2|S|$. Each time we do so vertex we see a new average degree $\frac{2e(G)-e_s}{v(G)-|S|}\geq \frac{dv(G)-d|S|}{v(G)-|S|}=d$. Therefore the average degree can only increase until this process halts, when we are left with a nonempty graph as required.
\end{proof}
 
\begin{proof}[Proof of Theorem \ref{thm:ex_gen} Part \ref{item:ex_gen_greedy}]
    If $G' \subseteq G$ has $(2k-1)|V(G)|$ edges then Lemma \ref{lem:min_degree_subgraph} gives a subgraph $G'' \subset G'$ with $\delta(G'') > 2k-1$. Then Proposition \ref{prop:delta_n} Part \ref{item:delta_n_ub} implies that we can find a rainbow copy of $T$ in $G''$. 
\end{proof}

\begin{proof}[Proof of Theorem \ref{thm:ex_gen} Part \ref{item:ex_gen_p4}]
We prove that If $G$ is $K_3$-free then $\exr(G,P_3)\leq|V(G)|$.

Any proper edge coloring of the fork graph (Figure~\ref{fig:fork graph}) contains a rainbow-$P_3$. Let $G'\subseteq G$ be properly edge colored with no rainbow $P_3$ - we will bound $e(G')$. Suppose some $x\in V(G')$ has degree at least $3$. Then since $G$ contains no $K_3$ and $G'$ contains no fork, each of the neighbors of $x$ must have degree $1$. It is always possible to decompose $G'=C_1\dot{\cup} \dots \dot{\cup} C_t$ as the disjoint union of $t$ connected components for $t \geq 1$. By the analysis above, if any $C_i$ contains a vertex of degree $3$ or more then $e(C_i) \leq v(C_i)-1$; furthermore  each $C_i$ contains at most one vertex of degree $3$ or more. If $a$ is the number of vertices of degree $3$ or more in $G'$ then
\begin{equation*}
    e(G')=\sum_{s=1}^{t}e(C_i)\leq |V(G)|-a \leq |V(G)|
\end{equation*}
and therefore $ \exr(G,P_3)\leq |V(G)|$.
\end{proof}

\begin{figure}[h!]
\centering
    \begin{tikzpicture}
    
    \node[fill, circle, inner sep=1.5pt, label=above:] (u) at (0,0) {};
    \node[fill, circle, inner sep=1.5pt, label=left:] (v1) at (-2,0) {};
    \node[fill, circle, inner sep=1.5pt, label=above:] (v2) at (2,1) {};
    \node[fill, circle, inner sep=1.5pt, label=below:] (v4) at (2,-1) {};

    \draw (u) -- (v1) node[midway, above] {};
    \draw (u) -- (v2) node[midway, above right] {};
    \draw (u) -- (v4) node[midway, below right] {};

    \node[fill, circle, inner sep=1.5pt, label=above:] (w11) at (-4, 0) {};
    \draw (v1) -- (w11);

\end{tikzpicture}
\caption{The fork graph}\label{fig:fork graph}
\end{figure}
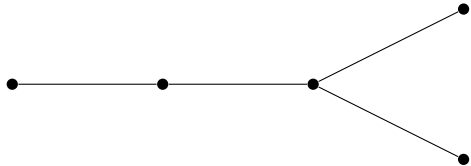

\subsection{$P_4$}

\begin{proof}[Proof of Theorem \ref{thm:ex_gen} Part \ref{item:ex_gen_p5}]

The general shape of the argument is similar to the one above for $P_3$. We begin with a lemma showing that certain arrangements of large-degree vertices contain a rainbow $P_4$.
\begin{lem} \label{lem:4_3path}
    If $G$ is $K_3$-free and contains a path $uvw$ with $\deg(u)\ge 4$ and $\deg(w)\ge 3$, then any proper coloring of $G$ contains a rainbow $P_4$.
\end{lem}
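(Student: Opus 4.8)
The plan is to build a rainbow $P_4$ (four edges, five vertices) by extending the given path $uvw$ at both ends. Write $\alpha=c(uv)$ and $\beta=c(vw)$; these are distinct since $c$ is proper. As $G$ is $K_3$-free, $u\not\sim w$, so for any $a\in N(u)\setminus\{v\}$ and $b\in N(w)\setminus\{v\}$ with $a\neq b$ the sequence $a\,u\,v\,w\,b$ is a genuine $P_4$: its five vertices are distinct because $a\neq w$ and $b\neq u$. It is rainbow exactly when the three colours not forced to differ do differ, i.e.\ $c(au)\neq\beta$, $c(wb)\neq\alpha$, and $c(au)\neq c(wb)$. Set $A=\{a\in N(u)\setminus\{v\}:c(au)\neq\beta\}$ and $B=\{b\in N(w)\setminus\{v\}:c(wb)\neq\alpha\}$; since $c$ is proper, $|A|\ge\deg(u)-2\ge 2$ and $|B|\ge\deg(w)-2\ge 1$. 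It therefore suffices to find $(a,b)\in A\times B$ with $a\neq b$ and $c(au)\neq c(wb)$.

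The easy case is $|A|\ge 3$ (in particular $\deg(u)\ge 5$): for any $b\in B$ the only $a\in A$ to avoid are $a=b$ and the unique $a$ with $c(au)=c(wb)$, leaving a valid choice. So assume $|A|=2$, say $A=\{a_1,a_2\}$; then necessarily $\deg(u)=4$ and there is a unique $a_0\in N(u)\setminus\{v\}$ with $c(ua_0)=\beta$. For a fixed $b\in B$ the extension fails only if $A\subseteq\{b\}\cup\{a:c(au)=c(wb)\}$, which since $|A|=2$ forces $b\in\{a_1,a_2\}$; thus every failing $b$ is a common neighbour of $u$ and $w$. If some $b\in B$ does not fail we are done, so assume all of $B$ fails; then (relabelling if necessary) $a_1\in B$ and $\emptyset\neq B\subseteq\{a_1,a_2\}\cap N(w)$. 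Now $v$ is also a common neighbour of $u$ and $w$, so $|B|=2$, or $a_0\in N(w)$, or $a_2\in N(w)$ would each give $u$ and $w$ three common neighbours, i.e.\ a $K_{2,3}$ in $G$. Excluding this — here one uses that $G$ is also $K_{2,3}$-free, as holds in the application Theorem~\ref{thm:ex_gen} Part~\ref{item:ex_gen_p5}, and some such hypothesis is genuinely needed — forces $B=\{a_1\}$, $\deg(w)=3$, and $N(w)=\{v,a_1,b_0\}$ with $b_0\notin N(u)$, $c(wb_0)=\alpha$, and $c(a_2u)=c(wa_1)$ coming from the failure of $b=a_1$.

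To finish in this last configuration, put $\alpha_1=c(ua_1)$ and $\alpha_2=c(ua_2)=c(wa_1)$; properness at $u$ makes $\alpha,\beta,\alpha_1,\alpha_2$ pairwise distinct. Then $a_0\,u\,a_1\,w\,b_0$ is a genuine $P_4$ — its vertices are distinct since $a_0\neq w$ (as $u\not\sim w$), $b_0\notin N(u)$, and $a_1\neq b_0$ (distinct neighbours of $w$) — and its four edges carry the distinct colours $\beta,\alpha_1,\alpha_2,\alpha$, so it is rainbow. The only real difficulty is the $\deg(u)=4$ bookkeeping: one must track the forbidden choices tightly enough to see that the sole obstruction to the plain extension $a\,u\,v\,w\,b$ is three common neighbours of $u$ and $w$ (a $K_{2,3}$), and then keep the ``swapped'' extension $a_0\,u\,a_1\,w\,b_0$ in reserve. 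This is also why one takes $\deg(u)\ge 4$ rather than $\ge 3$, and why $K_{2,3}$-freeness (available throughout the $P_4$ argument) enters — a $K_{2,3}$ carrying the colouring $c(ua_i)=c(wa_{3-i})$ admits no rainbow $P_4$ at all.
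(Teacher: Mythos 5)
Your argument is essentially correct, but note that it proves an amended statement: Lemma~\ref{lem:4_3path} with the extra hypothesis that $G$ is $K_{2,3}$-free. You are right that some such hypothesis is genuinely needed, and this is a correction to the paper rather than a flaw in your write-up. Your parenthetical example (a twisted-coloured $K_{2,3}$) does not itself meet the degree hypothesis, since there $\deg(u)=3$; but one pendant edge fixes that: let $G$ be the $K_{2,3}$ with parts $\{u,w\}$ and $\{v,a_1,a_2\}$ together with a pendant vertex $a_0$ adjacent to $u$, coloured $c(uv)=\alpha$, $c(vw)=\beta$, $c(ua_0)=\beta$, $c(ua_1)=c(wa_2)=\gamma_1$, $c(ua_2)=c(wa_1)=\gamma_2$. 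This graph is triangle-free, the colouring is proper, $\deg(u)=4$, $\deg(w)=3$, and yet it contains no rainbow $P_4$: only four colours occur, $\alpha$ only on $uv$ and $\beta$ only on $vw$ and $ua_0$, so a rainbow $P_4$ would have to use $uv$ and exactly one $\beta$-edge, and a short check shows every such path repeats a $\gamma_i$. The paper's own proof of the lemma breaks on exactly this colouring: in its final step the vertices $y_2$ and $x_3$ coincide (both equal $a_2$), so the exhibited ``path'' $y_2wvux_3$ is in fact a $4$-cycle; the unexamined case is precisely a third common neighbour of $u$ and $w$, i.e.\ a $K_{2,3}$.

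With $K_{2,3}$-freeness added, your proof is complete: the bounds $|A|\ge\deg(u)-2\ge 2$ and $|B|\ge\deg(w)-2\ge 1$, the reduction of any failing $b$ to a common neighbour of $u$ and $w$, the use of $K_{2,3}$-freeness to force $B=\{a_1\}$, $N(w)=\{v,a_1,b_0\}$ with $b_0\notin N(u)$, $c(wb_0)=\alpha$ and $c(ua_2)=c(wa_1)$, and the reserve path $a_0ua_1wb_0$, whose four colours are the four distinct colours at $u$, all check out. (One cosmetic slip: $|A|\ge 3$ does not imply $\deg(u)\ge 5$; you mean the converse implication.) Your route --- extending $uvw$ at both ends by counting bad choices and keeping one swapped path in reserve --- is genuinely different from the paper's colour-by-colour case analysis, and it has the advantage of isolating the unique irreparable configuration as a $K_{2,3}$. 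Since Lemma~\ref{lem:4_3path} is invoked only inside the proof of Theorem~\ref{thm:ex_gen} Part~\ref{item:ex_gen_p5}, where the host and hence every subgraph considered is $\{K_3,K_{2,3}\}$-free, the amended lemma supports every application; the lemma's statement (and its proof) should be corrected to include $K_{2,3}$-freeness.
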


\begin{proof}
    By assumption $u$ has three neighbors $x_1,x_2,x_3 \ne v$ and $w$ has two neighbors $y_1,y_2 \ne v$. Furthermore, as $G$ is triangle-free, $x_1,x_2,x_3 \ne w$ and $y_1,y_2 \ne u$. Suppose that $uv$ is colored $A$ and $vw$ is colored $B$.  As $G$ is properly colored, at least one of $\{wy_1,wy_2\}$ has a color $C \notin \{A,B\}$, and at least one edge in $\{ux_1,ux_2,ux_3\}$ has some color $D \notin \{A,B,C\}$. Without loss of generality, suppose that $wy_1$ is colored $C$. If $x_iu$ has color $D \notin \{A,B,C\}$ and $x_i \ne y_1$ for some $i$ then we are done. Therefore we are done unless no such $x_i$ exists. Note that as $G$ is properly edge-colored and $uv$ is colored $A$, no $x_iu$ is colored $A$. Without loss of generality, we may assume that $x_1=y_1$, $x_2u$ is colored $B$ and $x_3u$ is colored $C$.  Then, $x_1u$ is colored some $D \notin \{A,B,C\}$. Note that $wy_2$ must be colored $E \in \{B,D\}$, as otherwise the path $x_2ux_1wy_2$ will be rainbow with colors $B,D,C,E$.  We must in fact have $E=D$ since $wy_2$ is incident to $wv$.  But in this case the path $y_2wvux_3$ is rainbow with colors $D,B,A,C$.
\end{proof}

    Next we proceed to the proof proper of Theorem \ref{thm:ex_gen} Part \ref{item:ex_gen_p5}. Suppose we have a properly colored $H\subseteq G$ with strictly more than $\frac{3}{2}|V(G)|$ edges; we will show it contains a rainbow $P_4$. First we pass to a subgraph $G'\subseteq H$ of average degree greater than 3 such that every neighbor $v$ of a vertex $u$ of degree at least 4 has degree at least 3.
    
        This is possible by taking $G'$ from Lemma \ref{lem:min_degree_subgraph}.  Note that $G'$ has no vertices of degree less than 2, so the claim can only fail if a vertex $u$ of degree at least $4$ is adjacent to a vertex $v$ of degree 2.  By the furthermore part of Lemma \ref{lem:min_degree_subgraph}, $G'$ contains no two adjacent vertices of degree 2 (since they are incident only to 3 edges), so $v$'s other neighbor must have degree at least 3, in which case Lemma \ref{lem:4_3path} furnishes a rainbow $P_4$. Throughout, when we write $v_1W$ we mean the set of edges between vertex $v_1$ and the set $W$.
    
    Since $G'$ has average degree larger than 3, there exists at least one such $u$ with at least 4 neighbors $v_1,\ldots,v_4$ where each $v_i$ is adjacent to at least two vertices $W_i=\{w_{i,1},w_{i,2}\}$ (note that the sets $W_i$ could at this point intersect each other, but they cannot contain $u$ or the other $v_i$ since $G$ is $K_3$-free). We will show that any such local structure contains a rainbow $P_4$.

    Let's first suppose that one such $W$, say $W_1$, does not intersect any of the others. This is diagrammed in Figure \ref{fig:tree_diagram}. Then if $uv_1$ has color $A$, and $uv_2,uv_3,uv_4$ have colors $B,C,D$, then at least one of $\{B,C,D\}$, say $B$, does not appear on the edges $v_1W$. It is possible that one of the edges $v_2W_2$ has color $A$, but at least one of them avoids both $A$ and $B$. Finally one of the two edges $v_1W$ (which does not use $A$ or $B$) gives the fourth color, distinct from $C$ obtained from $v_2W_2$.

    Then we can suppose that all $W_i$ intersect at least one other, otherwise we may apply the above argument. Furthermore, if we find three $W_i$ meeting on a common vertex, then we would find a vertex of degree $3$ at distance $2$ from one with degree $4$ and we would apply Lemma \ref{lem:4_3path} again. Therefore we may assume that each $W_i$ can intersect at most two other $W_j$. The casework proceeding is easiest understood through the auxiliary graph on vertex set $\{1,2,3,4\}$, where an edge $ij$ appears if $W_i \cap W_j \neq \emptyset$. What we have established so far is that, on this graph, no vertex is isolated and no vertex has degree larger than $2$. Since there are no isolated vertices there are at least $2$ edges, and the degree hypothesis implies that there are at most $4$ edges. The unique graphs satisfying these requirements are the disjoint matching and the $C_4$. Since finding a rainbow path is strictly more difficult after identifying vertices, we are left to check the two maximal graphs with the largest number of gluings; one where $|W_1\cap W_2|=1,|W_3 \cap W_4| = 1$ (the matching) and one where $|W_1 \cap W_2| =1,|W_2 \cap W_3| =1,|W_3 \cap W_4| =1,|W_4\cap W_1| =1$ (the $C_4$). We use here $i \neq j \implies W_i \neq W_j$ since $G$ is $K_{2,3}$-free.

    Let us start with the $C_4$ case, seen in Figure \ref{fig:cycle_diagram}. Suppose first that the color $C$ does not appear on $v_1W_1$; then we find a rainbow $P_4$ by taking extending the path $v_1uv_3$ from $v_3$ by whichever edge does not have color $A$, and then from $v_1$ by taking whichever edge avoids this new color (we avoided $C$ by hypothesis). Then suppose, say, that $v_1w_{1,2}$ is colored $C$. Then consider the path $w_{1,2}v_1uv_4$ which has colors $C,A,D$. If either edge of $v_4W_4$ avoids colors $A,C$ we find a rainbow $P_4$; otherwise both appear and we find a rainbow $P_4$ by applying the argument just given, but now to the antipodal pair $B,D$ since no edge at $D$ has color $B$.
\begin{figure}
\centering
    \begin{tikzpicture}
   
    \node[fill, circle, inner sep=1.5pt, label=above:$u$] (u) at (0,0) {};
    \node[fill, circle, inner sep=1.5pt, label=left:$v_1$] (v1) at (-2,0) {};
    \node[fill, circle, inner sep=1.5pt, label=above:$v_2$] (v2) at (2,1) {};
    \node[fill, circle, inner sep=1.5pt, label=right:$v_3$] (v3) at (2,0) {};
    \node[fill, circle, inner sep=1.5pt, label=below:$v_4$] (v4) at (2,-1) {};

    \draw (u) -- (v1) node[midway, above] {$A$};
    \draw (u) -- (v2) node[midway, above right] {$B$};
    \draw (u) -- (v3) node[midway, right] {$C$};
    \draw (u) -- (v4) node[midway, below right] {$D$};

    \node[fill, circle, inner sep=1.5pt, label=above:$w_{1,1}$] (w11) at (-3, 1) {};
    \node[fill, circle, inner sep=1.5pt, label=below:$w_{1,2}$] (w12) at (-3, -1) {};
    \draw (v1) -- (w11);
    \draw (v1) -- (w12);

    \node[fill, circle, inner sep=1.5pt, label=above:$w_{2,1}$] (w21) at (3, 2) {};
    \node[fill, circle, inner sep=1.5pt, label=above:$w_{2,2}$] (w22) at (4, 1) {};
    \draw (v2) -- (w21);
    \draw (v2) -- (w22);

    \node[fill, circle, inner sep=1.5pt, label=below:$w_{3,1}$] (w31) at (3, -0.5) {};
    \node[fill, circle, inner sep=1.5pt, label=above:$w_{3,2}$] (w32) at (3, 0.5) {};
    \draw (v3) -- (w31);
    \draw (v3) -- (w32);

    \node[fill, circle, inner sep=1.5pt, label=below:$w_{4,1}$] (w41) at (3, -2) {};
    \node[fill, circle, inner sep=1.5pt, label=below:$w_{4,2}$] (w42) at (4, -1) {};
    \draw (v4) -- (w41);
    \draw (v4) -- (w42);
    
\end{tikzpicture}
\caption{The case when $W_1$ does not meet any other $W_i$. Note that the $w$ vertices on the right hand side may be identified if they are attached to distinct $v_i$.}\label{fig:tree_diagram}
\end{figure}
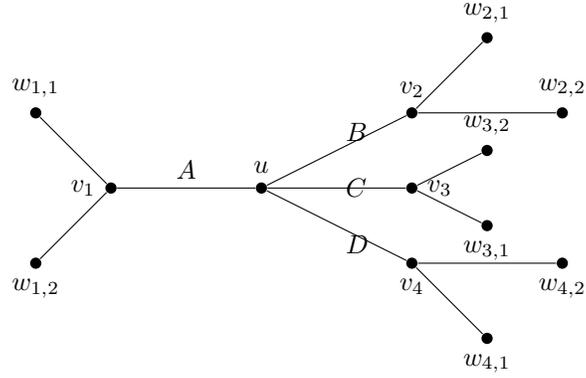

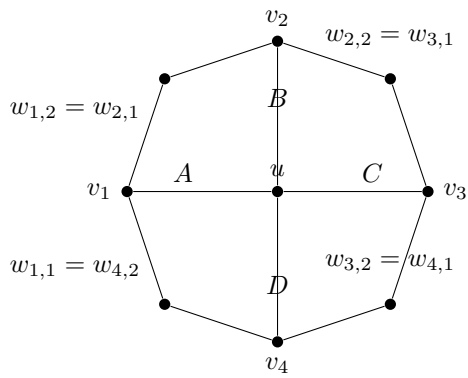
\begin{figure}
\centering
    \begin{tikzpicture}

    \node[fill, circle, inner sep=1.5pt, label=above:$u$] (u) at (0,0) {};
    \node[fill, circle, inner sep=1.5pt, label=left:$v_1$] (v1) at (-2,0) {};
    \node[fill, circle, inner sep=1.5pt, label=above:$v_2$] (v2) at (0,2) {};
    \node[fill, circle, inner sep=1.5pt, label=right:$v_3$] (v3) at (2,0) {};
    \node[fill, circle, inner sep=1.5pt, label=below:$v_4$] (v4) at (0,-2) {};

    \draw (u) -- (v1) node[midway, above left] {$A$};
    \draw (u) -- (v2) node[midway, above] {$B$};
    \draw (u) -- (v3) node[midway, above right] {$C$};
    \draw (u) -- (v4) node[midway, below] {$D$};

    \node[fill, circle, inner sep=1.5pt, label={[label distance=2mm]above left:{$w_{1,1} = w_{4,2}$}}] (w11) at (-1.5,-1.5) {};
    \node[fill, circle, inner sep=1.5pt, label={[label distance=2mm]below left:{$w_{1,2} = w_{2,1}$}}] (w12) at (-1.5,1.5) {};
    
    \node[fill, circle, inner sep=1.5pt, label={[label distance=2mm]above:{$w_{2,2} = w_{3,1}$}}] (w21) at (1.5,1.5) {};
    \node[fill, circle, inner sep=1.5pt, label={[label distance=2mm]above:{$w_{3,2} = w_{4,1}$}}] (w31) at (1.5,-1.5) {};

    \draw (v1) -- (w11);
    \draw (v1) -- (w12);
    
    \draw (v2) -- (w12);
    \draw (v2) -- (w21);
    
    \draw (v3) -- (w21);
    \draw (v3) -- (w31);
    
    \draw (v4) -- (w31);
    \draw (v4) -- (w11);
    
\end{tikzpicture}

\caption{The case when the $W_i$ intersect cyclically.}\label{fig:cycle_diagram}
\end{figure}

\end{proof}

\subsection{Many Leaves}
Suppose that $T$ is a tree with a vertex $x_0$ which is adjacent to many leaves. To find a rainbow embedding of $T$ it will help us to first embed the non-leaves so that they are not adjacent in $G$ to the image of $x_0$ - then we will be generally safe to greedily embed the leaves of $x_0$ without worrying that we have already used these vertices. First we prove a variant of the standard tree embedding lemma, which assumes that the host is $K_{2,r}$-free to ensure some separation between vertices of the embedded tree.
\begin{lem}\label{lem:k2rfree_embedding}
    Let $r \geq 3$ and $T$ be a tree with $\{x_0,\dots,x_k\}$ a leaf ordering. If $G$ is $K_{2,r}$-free with minimum degree $\delta(G)>2k+r-3$ and properly colored, then for all $v_0\in V(G)$ there exist distinct vertices $v_1,v_2,\ldots,v_k$ such that
    \begin{enumerate}
        \item $x_ix_j \in E(T)$ implies $v_iv_j \in E(G)$ and
        \item each of these $k$ edges receives a distinct color and
        \item $v_i$ is adjacent to $v_0$ if and only if $x_i$ is adjacent to $x_0$.
    \end{enumerate}
\end{lem}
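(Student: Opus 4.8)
The plan is to run the classical greedy tree-embedding argument along the leaf ordering $x_0,\dots,x_k$, while carrying the third conclusion as an extra invariant. Set $x_0\mapsto v_0$ as given, and suppose inductively that for some $1\le i\le k$ we have already chosen distinct vertices $v_0,\dots,v_{i-1}$ such that the images $f_1,\dots,f_{i-1}$ of the first $i-1$ edges of $T$ lie in $G$, receive pairwise distinct colors, and satisfy ``$v_j$ is adjacent to $v_0$ if and only if $x_j$ is adjacent to $x_0$'' for every $j<i$. Let $x_{i'}$ be the unique neighbor of $x_i$ that precedes it in the ordering. I will choose $v_i$ to be a neighbor of $v_{i'}$ for which (a) $v_i\notin\{v_0,\dots,v_{i-1}\}$; (b) $\phi(v_{i'}v_i)\notin\{\phi(f_1),\dots,\phi(f_{i-1})\}$; and (c) if $i'\ne 0$ then $v_i\notin N(v_0)$.

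The crux is to show such a $v_i$ exists, via a union bound over the neighbors of $v_{i'}$. Condition (a) forbids at most $i-1$ neighbors of $v_{i'}$, since $v_{i'}$ belongs to $\{v_0,\dots,v_{i-1}\}$ but is not its own neighbor. Condition (b) forbids at most $i-1$ further neighbors, because in a proper edge coloring each of the at most $i-1$ colors $\phi(f_j)$ occurs on at most one edge incident to $v_{i'}$. When $i'\ne 0$, condition (c) forbids exactly the neighbors of $v_{i'}$ lying in $N(v_0)$, that is, the common neighborhood $N(v_{i'})\cap N(v_0)$; since $v_{i'}\ne v_0$ and $G$ is $K_{2,r}$-free, this set has size at most $r-1$. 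Hence the number of forbidden neighbors of $v_{i'}$ is at most
\[
(i-1)+(i-1)+(r-1)=2i+r-3\le 2k+r-3<\delta(G)\le\deg(v_{i'}),
\]
so a valid choice of $v_i$ always remains; when $i'=0$ the term $r-1$ drops and the same bound applies with room to spare (using $r\ge 3$).

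It then remains to check that the invariant is preserved, which delivers the lemma once $i=k$. Conclusions (1) and (2) are immediate from (a) and (b), and distinctness of $v_0,\dots,v_k$ is built into (a). For (3): if $x_i$ is adjacent to $x_0$, then $x_0$ must be the unique neighbor of $x_i$ preceding it, so $i'=0$, whence $v_{i'}=v_0$ and $v_i\in N(v_0)$ automatically; if $x_i$ is not adjacent to $x_0$, then $i'\ne 0$ and (c) forces $v_i\notin N(v_0)$. The only delicate point in the whole argument is the degree count above: $K_{2,r}$-freeness is precisely what keeps the three families of obstructions jointly below $\delta(G)$, and it is what explains the shape $2k+r-3$ of the hypothesized minimum degree. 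Everything else is routine bookkeeping, so I do not anticipate a genuine obstacle beyond getting this count right.
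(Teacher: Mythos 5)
Your proposal is correct and follows essentially the same greedy argument as the paper: embed along the leaf ordering, forbid at most $(i-1)+(i-1)+(r-1)\le 2k+r-3<\delta(G)$ neighbors of $v_{i'}$ at each step, using $K_{2,r}$-freeness to bound the common neighborhood of $v_{i'}$ and $v_0$ by $r-1$. The extra verification of the ``if and only if'' in conclusion (3) is a harmless elaboration of what the paper leaves implicit.
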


\begin{proof}
   Inductively suppose we have picked $v_1,\ldots,v_{i-1}$ satisfying the necessary properties. To choose $v_i$ we must choose a neighbor of $v_{i'}$ such that 
    \begin{enumerate}
        \item $v_i\notin \{v_0,v_1,\ldots,v_{i-1}\}$ and
        \item $v_{i'}v_i$ does not have the same color as any of the previously selected $i-1$ edges and
        \item if $i'\ne 0$ then $v_i$ can not be adjacent to $v_0$.
    \end{enumerate}
    Note that the total number of bad choices for $v_i$ is at most $(i-1)+(i-1)+r-1$, with this last bound using that $v_{i'}$ has at most $r-1$ common neighbors with $v_0$ due to $G$ being $K_{2,r}$-free. In total then the number of bad choices is at most $2k+r-3<d$.  Thus we can choose $v_i$ satisfying the criteria at each stage, giving the result.
\end{proof}

\begin{proof}[Proof of Theorem \ref{thm:ex_gen} Part \ref{item:ex_gen_leaves}]
    If $e(G) > \frac{k-1}{2}V(G)$, by Lemma \ref{lem:min_degree_subgraph} we can find a subgraph $G'$ of $G$ with average degree at least $k$ and minimum degree at least $k/2$. Take a vertex $v_0\in V(G)$ with degree at least $k$. Let $T'$ be $T$ after removing all the leaves from $x_0$, noting that $e(T')=k-\ell$ has
    \[2e(T')+r-3 < k/2 \]
    and therefore we can apply Lemma \ref{lem:k2rfree_embedding} to find a rainbow $T'$ with $x_0$ sent to $v_0$. Suppose that $x_0$ has $m$ non-leaf neighbors. We greedily embed the $\ell$ leaves of $x_0$ to neighbors of $v_0$ with edge colors that aren't used elsewhere in the tree. This is possible since we have $\ell$ vertices to embed, the (unused) degree of $v_0$ is at least $k-m$, we must avoid the color and coordinate of the $k-m-\ell$ nonadjacent edges, and
    \[ (k-m)-2(k-m-\ell) = \ell.\]
\end{proof}
\section{Acknowledgments}
We are grateful to Dheer Noal Desai, Puck Rombach, and Jeremy Quail for insightful discussions and the latter for suggesting the problem. This work was started at the 2023 Graduate Research Workshop in Combinatorics, which was supported in part by NSF grant 1953985 and a generous award from the Combinatorics Foundation.

\bibliographystyle{plain}
\bibliography{references.bib}

\end{document}